\newtheorem{Remark}{Remark}[section]
\newtheorem{Corollary}[Remark]{Corollary}
\newtheorem{Fact}[Remark]{Fact}
\newtheorem{Lemma}[Remark]{Lemma}
\newtheorem{Theorem}[Remark]{Theorem}
\newcommand{\bE}{\mathbb{E}}
\newcommand{\bH}{\mathbb{H}}
\newcommand{\bI}{\mathbb{I}}
\newcommand{\bN}{\mathbb{N}}
\newcommand{\bR}{\mathbb{R}}
\newcommand{\bW}{\mathbb{W}}
\newcommand{\bV}{\mathbb{V}}
\newcommand{\bX}{\mathbb{X}}
\newcommand{\bZ}{\mathbb{Z}}
\newcommand{\cC}{\mathcal{C}}
\renewcommand{\cH}{\mathcal{H}}
\newcommand{\cN}{\mathcal{N}}
\newcommand{\cT}{\mathcal{T}}
\numberwithin{equation}{section} \errorcontextlines=0
\newcommand{\degso}{\nabla_{\sone}\textrm{-}\mathrm{deg}}
\newcommand{\degg}{\nabla_{G}\textrm{-}\mathrm{deg}}
\newcommand{\degh}{\nabla_{H}\textrm{-}\mathrm{deg}}
\newcommand{\diag}{\mathrm{ diag \;}}
\newcommand{\sone}{SO(2)}
\newcommand{\sub}{\overline{\operatorname{sub}}}
\newcommand{\cont}{\mathcal{C}(\tilde{u}_i, \lambda_0)}
\newcommand{\BIF}{\mathcal{BIF}}
\newcommand{\sphere}{S^{N-1}}
\begin{document}

\title[Sets of solutions]{Structure of sets of solutions of parametrised semi-linear elliptic systems on spheres}%
\subjclass[2010]{Primary: 35J50; Secondary: 35B32.}
\keywords{Non-cooperative elliptic systems, global bifurcations, equivariant degree.}

\author{Anna Go\l\c{e}biewska}
\address{Faculty of Mathematics and Computer Science\\
Nicolaus Copernicus University \\
PL-87-100 Toru\'{n} \\ ul. Chopina $12 \slash 18$ \\
Poland, ORCID 0000--0002--2417--9960}

\email{Anna.Golebiewska@mat.umk.pl}

\author{Piotr Stefaniak}
\address{School of Mathematics, West Pomeranian University of Technology \\PL-70-310 Szcze\-cin, al. Piast\'{o}w $48\slash 49$, Poland, ORCID 0000--0002--6117--2573}

\email{pstefaniak@zut.edu.pl}

\numberwithin{equation}{section}
\allowdisplaybreaks
\date{\today}

\maketitle
%============================================================================================================================================
\begin{abstract}
In this paper we study a parametrised non-cooperative symmetric semi-linear elliptic system on a sphere. Assuming that there exist critical orbits of the potential, we study the structure of the sets of solutions of the system.
In particular, using the equivariant Rabinowitz Alternative we formulate sufficient conditions for a bifurcation of unbounded sets of solutions.

\end{abstract}

\section{Introduction}

The aim of this paper is to study global bifurcations of weak solutions of a parametrised system on the $(N-1)$-dimensional unit sphere $S^{N-1}$. More precisely, we consider the system
\begin{equation}\label{eq:intro}
\left\{\begin{array}{lclcl}
a_1\Delta_{S^{N-1}} u_1(x)&=&\nabla_{u_1}F(u(x),\lambda) \\
a_2\Delta_{S^{N-1}} u_2(x)&=&\nabla_{u_2}F(u(x),\lambda) \\
\vdots&\vdots&&\ \ \ \text{on}& S^{N-1}, \\
a_p\Delta_{S^{N-1}} u_p(x) &=&\nabla_{u_p}F(u(x),\lambda)
\end{array}
\right.
\end{equation}
where $\Delta_{S^{N-1}}$ is the Laplace--Beltrami operator on $S^{N-1}$,  $a_i\in\{-1,1\}$ for $i=1,\ldots,p$ and $F$ satisfies some additional assumptions.

We consider a situation when the system \eqref{eq:intro} has a family of trivial solutions. For example if  $u_0\in(\nabla_u F(\cdot,\lambda))^{-1}(0)$ for all $\lambda \in \mathbb{R}$, then the constant function $\tilde{u}_0 \equiv u_0$ is a solution of the system for all $\lambda \in \bR$ and we obtain a family of trivial solutions $\{\tilde{u}_0\} \times \bR$. Investigating a change of a topological degree of an operator associated with the system along such a family, one can prove the existence of a continuum (i.e. a closed, connected set) of nontrivial solutions, emanating from this family, i.e. the phenomenon of global bifurcation.

The idea of studying the global bifurcation has been originated by Rabinowitz, see \cite{Rabinowitz}, \cite{Rabinowitz1}, with the use of the Leray-Schauder degree. The same reasoning can be applied if the Leray-Schauder degree is replaced by another invariant, having analogous properties, for example by the degree for invariant strongly indefinite functionals. The counterpart of the Rabinowitz result in this case has been proposed in \cite{GolRyb2011}. Afterwards, this theorem has been applied for studying global bifurcation phenomenon for elliptic systems, for example in \cite{GolKlu}, \cite{GolRyb2011}, \cite{RybSte}.

In particular, the Rabinowitz alternative guarantees that the bifurcating continuum either is unbounded, or it meets the trivial family at another level  $\lambda_0 \in \bR.$  Moreover, in the latter case the continuum  has to satisfy some equality, given in terms of a bifurcation index.  If we exclude this situation, we obtain in this way the existence of unbounded sets of solutions.

Such a method of finding  unbounded continua of solutions has been applied for example in \cite{Ryb1996} and \cite{RybSte} for an elliptic equation and for a system of elliptic equations on spheres. In these papers it has been proved that in the considered problems all bifurcating continua of solutions are unbounded. Similar results for elliptic systems on geodesic balls have been given in \cite{RybShiSte}. We refer to these papers for a discussion of other results about unbounded sets of solutions.

However, the results described above, concerning the global bifurcation phenomenon and the existence of unbounded sets of solutions, are obtained under the assumption that critical points of the potentials of the considered systems are isolated. On the other hand, when there are some additional symmetries, this assumption is usually not satisfied.

In our research we study the systems with symmetries. Note that symmetry of the system \eqref{eq:intro} appears in a natural way, since $S^{N-1}$ is an $SO(2)$-invariant set (by $SO(2)$ we understand the special orthogonal group). However, we assume that also the potential $F$ is invariant with respect to the action of some compact Lie group $\Gamma$.

The symmetries of the system are inherited by the associated  functional. In consequence, the critical points of this functional form orbits and we consider the family of these orbits as a set of trivial solutions. Our aim  is to study global bifurcations from this family. To this end, we use the equivariant version of the Rabinowitz alternative, formulated in \cite{GolSte}. This theorem is expressed in terms of the equivariant bifurcation index, defined in terms of the degree for invariant strongly indefinite functionals. As in the classical version, it states that a bifurcating continuum either is unbounded or it returns to the trivial family. In our paper we study the latter possibility of the alternative and analyse the conditions for the existence of bounded continua. Expressing these conditions in terms of the right-hand side of the system we can easily verify them and, excluding in this way the latter possibility in the Rabinowitz alternative, obtain the existence of unbounded continua of solutions.

The main results of our paper are the theorems concerning  the existence of unbounded sets of solutions of the system \eqref{eq:intro}, namely Theorems \ref{thm:unbound1}, \ref{thm:unboundzero}, \ref{thm:unbounded2},  \ref{thm:unbounded0}. More precisely, we analyse two situations: a system having one critical orbit and a system having two critical orbits. In either case we study the formulae for the sum of bifurcation indices and we examine if the latter possibility of the Rabinowitz alternative can occur. As a result we formulate conditions for the existence of unbounded sets of solutions. In particular, in the case of a system having one critical orbit we obtain that all bifurcating continua are unbounded. In the case of a system having two critical orbits, we obtain such a result under some additional assumptions. Moreover, we prove that without these assumptions, there still have to exist unbounded continua of solutions of \eqref{eq:intro}.

We emphasise that our results are obtained without the assumption that the critical points of the potential are isolated. As far as we know such a situation has not been considered for elliptic systems yet, except in our papers \cite{GolKluSte}, \cite{GRS}, \cite{GolSte}.

%-------------------------------------------------------
%-------------------------------------------------------
%-------------------------------------------------------
\section{Global bifurcation from an orbit}\label{sec:global}
The main tool which we use in our paper is the  equivariant global bifurcation theorem, formulated in \cite{GolSte}. For the convenience of the reader we recall in this section the relevant material concerning this theory.

Throughout this section $G$ denotes a compact Lie group. Note that we use the standard notation concerning representations of Lie groups. In particular, if $\bV$ is a $G$-representation and $v_0 \in \bV$, we denote by $G_{v_0}$ the isotropy group of $v_0$ and by $G(v_0)$ its orbit. We say that a function $\varphi \colon \bV \to \bR$ is $G$-invariant if $\varphi(gv)=\varphi(v)$ for all $g \in G, v \in \bV$ and a function $\varphi\colon \bV \to \bV$ is $G$-equivariant if $\varphi(gv)=g\varphi(v)$ for all $g \in G, v \in \bV$. The detailed exposition of terminology from equivariant topology can be found for example in \cite{Ryb2005milano}.

Let $\bH$ be an infinite dimensional  Hilbert space, which is an orthogonal $G$-representation and assume that there exists an approximation scheme $\{\pi_n\colon \bH \to \bH\colon n \in \bN \cup \{0\}\}$ on $\bH$ (see \cite{GolRyb2011} for the definition of an approximation scheme).

Let $\Phi \in C^2(\bH \times \bR, \bR)$ be a $G$-invariant functional such that $\nabla_u \Phi(u, \lambda)=Lu-\nabla_u \eta(u, \lambda)$, where $L\colon \bH \to \bH$ is a linear, bounded, self-adjoint, $G$-equivariant Fredholm operator of index $0$ satisfying $\pi_0(\bH)=\ker L$ and $\pi_n \circ L = L \circ \pi_n$ for all $n \in \bN \cup \{0\}$, and $\nabla _u \eta\colon \bH \times \bR \to \bH$ is a $G$-equivariant, completely continuous operator.

Assume that there exists $u_0 \in \bH$ such that $\nabla_u \Phi(u_0, \lambda)=0$ for all $\lambda \in \bR$. From the $G$-equivariance of $\nabla_u \Phi$ it follows that $G(u_0)$ is a critical orbit of $\Phi(\cdot,\lambda)$ for every $\lambda\in\bR$, i.e. it consists of critical points of $\Phi(\cdot,\lambda)$. Hence we can consider a family $\cT=G(u_0) \times \bR$ of solutions of $\nabla_u\Phi(u, \lambda)=0.$ The elements of $\cT$ are called the trivial solutions of this equation. On the other hand, we define the family of nontrivial solutions as $\cN=\{(u, \lambda) \in \bH \times \bR \colon \nabla_u \Phi(u, \lambda)=0, (u, \lambda) \notin \cT\}.$ We are interested in studying the phenomenon of bifurcation from the orbit $G(u_0) \times \{\lambda_0\}$ for some $\lambda_0 \in \bR$, i.e. the existence of a sequence of orbits $G(u_n)\times\{\lambda_n\}\subset\cN$ converging to $G(u_0)\times\{\lambda_0\}.$ By such convergence we understand that for all $g\in G$ there holds: $(gu_n,\lambda_n)$ converges to $(gu_0,\lambda_0)$. Note that, by the orthogonality of $\bH$, to obtain a bifurcation  from the orbit $G(u_0)\times\{\lambda_0\}$ it suffices to find a sequence of elements of $\cN$ converging to $(u_0,\lambda_0)$.

It is known (see Fact 2.10 of \cite{GolSte}) that the necessary condition for a bifurcation from $G(u_0)\times\{\lambda_0\}$ is $$\dim \ker\nabla^2_u\Phi(u_0, \lambda_0) > \dim (G(u_0) \times \{\lambda_0\}).$$ Denote by $\Lambda$ the set of $\lambda_0 \in \bR$ satisfying this condition.

In order to formulate the sufficient condition, we introduce a bifurcation index, being an element of the so called Euler ring $U(G)$, see Section \ref{sec:Euler} for a description of this ring. This index is defined with the use of the degree for invariant strongly indefinite functionals, denoted by $\degg(\cdot,\cdot)$, described in \cite{GolRyb2011}, see also Section \ref{sec:degree}.

Fix $\lambda_0 \in \Lambda$ and assume that there exists $\varepsilon >0$ such that $\Lambda \cap [\lambda_0-\varepsilon, \lambda_0+\varepsilon]=\{ \lambda_0\}.$ With this assumption, we obtain that $G(u_0)$ is an isolated critical orbit of $\Phi(\cdot, \lambda_0 \pm \varepsilon)$ and hence there exists  an open, $G$-invariant set $\Omega \subset \bH$ such that $(\nabla_u\Phi(\cdot, \lambda_0 \pm \varepsilon))^{-1}(0) \cap cl(\Omega)=G(u_0)$. Therefore we can define the bifurcation index by
$$\BIF_G(G(u_0),\lambda_0) = \degg(\nabla_u\Phi(\cdot, \lambda_0+\varepsilon), \Omega)-\degg(\nabla_u\Phi(\cdot, \lambda_0 - \varepsilon), \Omega).$$

Note that $\BIF_G(G(u_0),\lambda_0)$ is defined in terms of the degree in a neighbourhood of the orbit. In \cite{GolSte} we have developed the technique of computing such a degree, with the use of the degree on a space normal to the orbit. Using this theory, instead of computing the index $\BIF_G(G(u_0),\lambda_0)$, we can use another index, which is easier to compute. Put $\bW=(T_{u_0}G(u_0))^{\perp}, H=G_{u_0}$ and $\Psi=\Phi_{|\bW}$. Then the bifurcation index, being an element of $U(H)$, given by $$\BIF_H(u_0,\lambda_0) = \degh(\nabla_u\Psi(\cdot, \lambda_0+\varepsilon), \Omega\cap \bW)-\degh(\nabla_u\Psi(\cdot, \lambda_0-\varepsilon), \Omega\cap \bW)$$ is well-defined, see \cite{GolSte} for the details.

We are particularly interested in the case when the pair $(G,H)$ is admissible (i.e. it satisfies the condition that for any closed subgroups $H_1,H_2 \subset H$ there holds the implication: if subgroups $H_1$ and $H_2$ are not conjugate in $H$, then they are not conjugate in $G$, see \cite{PRS} for examples of admissible pairs). In such a situation, to prove the bifurcation from the orbit $G(u_0) \times \{\lambda_0\}$ it is enough to show that $\BIF_H(u_0,\lambda_0) \neq \Theta$, where $\Theta$ is the zero element in the Euler ring $U(H)$. More precisely, from Theorems 2.11 and 2.12  in \cite{GolSte} it follows the bifurcation theorem:

\begin{Theorem}\label{thm:rabinowitz0}
Fix $\lambda_0 \in \Lambda$ and let  the pair $(G,H)$ be admissible. Moreover assume that $\BIF_H(u_0,\lambda_0) \neq \Theta \in U(H)$. Then the bifurcation from the orbit $G(u_0)\times \{\lambda_0\}$ occurs. Furthermore there exists a connected component $\mathcal{C}(u_0,\lambda_0)$ of $cl(\mathcal{N})$, containing $(u_0,\lambda_0).$
\end{Theorem}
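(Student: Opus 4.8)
The plan is to reduce the bifurcation problem for the orbit to a problem on the normal slice, to transfer the nonvanishing of the index from $U(H)$ to $U(G)$ using admissibility, and then to run the usual change-of-degree argument together with a connectedness lemma. Throughout, the hypothesis $\lambda_0\in\Lambda$ with $\Lambda\cap[\lambda_0-\varepsilon,\lambda_0+\varepsilon]=\{\lambda_0\}$ guarantees that $G(u_0)$ is an isolated critical orbit of $\Phi(\cdot,\lambda_0\pm\varepsilon)$, so that the sets $\Omega$, the degrees $\degg$, $\degh$, and the indices $\BIF_G(G(u_0),\lambda_0)$, $\BIF_H(u_0,\lambda_0)$ are all defined.

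First I would pass to the slice $\bW=(T_{u_0}G(u_0))^{\perp}$ with $H=G_{u_0}$ and $\Psi=\Phi|_{\bW}$. By the normal-space technique of \cite{GolSte}, the degree of $\nabla_u\Phi(\cdot,\lambda_0\pm\varepsilon)$ in the $G$-invariant tubular neighbourhood $\Omega$ of the orbit is recovered from the degree of the $H$-equivariant field $\nabla_u\Psi(\cdot,\lambda_0\pm\varepsilon)$ on the slice $\Omega\cap\bW$; taking the difference across $\lambda_0$ this yields a relation between $\BIF_G(G(u_0),\lambda_0)\in U(G)$ and $\BIF_H(u_0,\lambda_0)\in U(H)$, realised by a natural homomorphism $U(H)\to U(G)$. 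Here the admissibility of $(G,H)$ is decisive: since non-conjugate closed subgroups of $H$ stay non-conjugate in $G$, distinct orbit-type generators are not identified and no cancellation occurs under $U(H)\to U(G)$. Consequently the assumption $\BIF_H(u_0,\lambda_0)\neq\Theta$ forces $\BIF_G(G(u_0),\lambda_0)\neq\Theta$.

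Next I would deduce the bifurcation by contradiction from the change of degree. Suppose bifurcation from $G(u_0)\times\{\lambda_0\}$ does not occur; then there is no sequence of elements of $\cN$ converging to $(u_0,\lambda_0)$, and by the orthogonality remark following the definition of $\cN$ this means that, after possibly shrinking $\varepsilon$ and $\Omega$, the zero set of $\nabla_u\Phi(\cdot,\lambda)$ meets $cl(\Omega)$ only along the orbit for every $\lambda\in[\lambda_0-\varepsilon,\lambda_0+\varepsilon]$. In particular $\nabla_u\Phi(\cdot,\lambda)$ has no zeros on $\partial\Omega$, so $\lambda\mapsto\nabla_u\Phi(\cdot,\lambda)$ is an admissible $G$-equivariant homotopy on $\Omega$, and homotopy invariance of $\degg$ gives $\degg(\nabla_u\Phi(\cdot,\lambda_0+\varepsilon),\Omega)=\degg(\nabla_u\Phi(\cdot,\lambda_0-\varepsilon),\Omega)$, i.e. $\BIF_G(G(u_0),\lambda_0)=\Theta$, a contradiction. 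Hence bifurcation occurs and, in particular, $(u_0,\lambda_0)\in cl(\cN)$.

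Finally, for the continuum I would take $\mathcal{C}(u_0,\lambda_0)$ to be the connected component of $cl(\cN)$ through $(u_0,\lambda_0)$ and use a Whyburn--Kuratowski separation argument to ensure it is nontrivial. If $\mathcal{C}(u_0,\lambda_0)$ failed to link the two sides of the hyperplane $\lambda=\lambda_0$ inside a small compact portion of $cl(\cN)$ about the orbit, the connectedness lemma would split that portion into two relatively closed sets lying strictly on either side of $\lambda_0$; enclosing the orbit in a $G$-invariant isolating neighbourhood adapted to this splitting and applying excision and homotopy invariance of $\degg$ would once more yield $\BIF_G(G(u_0),\lambda_0)=\Theta$, contradicting the previous step. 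I expect the main obstacle to be the first step --- making the correspondence $\degg\leftrightarrow\degh$ on the tubular neighbourhood precise and verifying that admissibility really prevents cancellation in $U(H)\to U(G)$ --- with the construction of $G$-invariant isolating neighbourhoods in the connectedness argument a close second; the homotopy-invariance computations themselves are then routine.
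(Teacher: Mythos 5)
Your first two steps reproduce exactly the mechanism behind this theorem. Note that the paper does not prove the statement itself: it is quoted from Theorems 2.11 and 2.12 of \cite{GolSte}, and the argument there is precisely your slice reduction to $\bW=(T_{u_0}G(u_0))^{\perp}$, the transfer of nonvanishing from $U(H)$ to $U(G)$ (admissibility guarantees that the natural additive map $U(H)\to U(G)$, sending $\chi_H(H/K^+)$ to $\chi_G(G/K^+)$, maps distinct generators to distinct generators, hence is injective), and the homotopy-invariance contradiction showing that absence of bifurcation would force $\BIF_G(G(u_0),\lambda_0)=\Theta$. These two steps already finish the proof: bifurcation yields a sequence of points of $\cN$ converging to $(u_0,\lambda_0)$, hence $(u_0,\lambda_0)\in cl(\cN)$, and the connected component of $cl(\cN)$ through this point then exists automatically; the substantive properties of $\cC(u_0,\lambda_0)$ (nontriviality, unboundedness or return to $\cT$) belong to the Rabinowitz alternative of Theorem \ref{thm:AltRab}, not to this statement.

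Your third step, however, should be deleted, because the claim it tries to establish is false. A nonzero jump of the local degree does not force the bifurcating set to meet both sides of the hyperplane $\lambda=\lambda_0$: for the scalar pitchfork $\nabla_u\Phi(u,\lambda)=-(\lambda-\lambda_0)u+u^3$ (trivial group, $\bH=\bR$) all nontrivial zeros lie at $\lambda>\lambda_0$, yet the local degrees at $\lambda_0\pm\varepsilon$ are $+1$ and $-1$. Your sketched excision/homotopy argument breaks down in exactly this situation: when the bifurcating set is one-sided, its nontrivial zeros still accumulate at the orbit from that side, so any $\Omega$ isolating $G(u_0)$ at the parameter values $\lambda_0\pm\varepsilon$ (as the definition of the index requires) will have nontrivial zeros crossing $\partial\Omega$ at intermediate parameter values, and homotopy invariance cannot be applied on all of $[\lambda_0-\varepsilon,\lambda_0+\varepsilon]$. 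Since the theorem only asserts the existence of the component containing $(u_0,\lambda_0)$, this error does not invalidate your proof of the statement as posed, but the step is a genuine flaw rather than the routine verification you describe it as.
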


If the assertion of the above theorem is satisfied, we obtain a connected component for every $v\in G(u_0)$. In particular, if the group $G$ is connected then there exists a connected subset of $cl(\cN)$ containing  $G(u_0)\times \{\lambda_0\}$.  Therefore, for simplicity  of the exposition from now on we assume that the group $G$ is connected.

In our paper we are interested in studying not only the existence, but also the  behaviour of the continuum $\mathcal{C}(u_0,\lambda_0)$ (by a continuum we understand a closed connected set). It occurs that such a set is either unbounded, or it meets the set $\cT$ at $G(u_0)\times \{\lambda_1\}$ for $\lambda_1 \neq \lambda_0$, see Theorem 2.11 of \cite{GolSte}. Below we formulate a more general version of this result. We consider the set $\cT$ in a more general form, namely, as a union of disjoint sets
of the form $G(u_i) \times \bR$, i.e. there exist $u_1, \ldots, u_q \in \bH$ such that $\nabla_u \Phi(u_i, \lambda)=0$ for all $\lambda \in \bR$ and $i=1, \ldots, q$ and $\cT=(G(u_1) \cup \ldots \cup G(u_q))\times \bR, G(u_i) \cap G(u_j) = \emptyset$ for $i \neq j$. Moreover, we assume that the isotropy groups of  all  elements in $\cT$ are conjugate, i.e. $(G_{u_1})=\ldots=(G_{u_q})$, where by $(H)$ we denote the conjugacy class of $H \subset G$.

In this case, for $u_i \in \{u_1, \ldots, u_q\}$ we denote by $\Lambda_i$ the set of $\lambda_0 \in \bR$ such that  $\dim \ker \nabla^2_u\Phi(u_i, \lambda_0)> \dim (G(u_i) \times\{\lambda_0\})$. Combining the ideas of Theorems 2.11, 2.12 and Remark 2.13 of \cite{GolSte} we obtain the following version of the Rabinowitz alternative:

\begin{Theorem}\label{thm:AltRab}
Fix $ u_i \in \{u_1, \ldots, u_q\}$ and $ \lambda_0\in \Lambda_i$. Assume that $(G_{u_1})=\ldots=(G_{u_q})$  and, denoting $H=G_{u_1}$, that the pair $(G,H)$ is admissible. If $\BIF_H(u_i, \lambda_0) \neq \Theta \in U(H)$, then the continuum  $\mathcal{C}(u_i, \lambda_0)$ bifurcating from $G(u_i) \times \{\lambda_0\}$:
\begin{enumerate}
\item either is unbounded in $\bH \times \bR$
\item or $\mathcal{C}(u_i, \lambda_0) \cap (\cT \setminus (G(u_i) \times \{ \lambda_0\})) \neq \emptyset.$
\end{enumerate}
Moreover, if  (2) occurs, then there exist $r_1, \ldots, r_k \in \bN \cup \{0\}$ such that $\{\lambda_{k,1}, \ldots, \lambda_{k,r_k}\}\subset \Lambda_k $, $\cC(u_i, \lambda_0) \cap \cT=\bigcup_{k=1}^{q} \bigcup_{j=1}^{r_k} G(u_k) \times \{\lambda_{k,j}\}$ and $$\sum_{k=1}^{q} \sum_{j=1}^{r_k} \BIF_H(u_k, \lambda_{k,j})= \Theta. $$
\end{Theorem}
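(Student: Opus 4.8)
The plan is to deduce this multi-orbit statement from the single-orbit results of \cite{GolSte} (Theorems 2.11, 2.12 and Remark 2.13), the decisive structural point being that the hypothesis $(G_{u_1})=\cdots=(G_{u_q})=(H)$ places \emph{every} bifurcation index in the \emph{same} Euler ring $U(H)$, so that the additive bookkeeping underlying the index formula is meaningful across the different orbits. Throughout I would pass to the normal slice, replacing $\Phi$ by $\Psi=\Phi|_{\bW}$ with $\bW=(T_{u_0}G(u_0))^{\perp}$ and the $G$-degree by the $H$-degree $\degh$, exactly as in the definition of $\BIF_H(u_k,\cdot)$ recalled above, so that all the degrees being compared are elements of $U(H)$. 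The extra orbits $G(u_k)$ with $k\neq i$ that the continuum happens to miss can always be excluded from the relevant neighbourhoods by compactness, which is what reduces the multi-orbit picture to the machinery of \cite{GolSte}.

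\textbf{The dichotomy (1)--(2).} Since $\BIF_H(u_i,\lambda_0)\neq\Theta$, Theorem \ref{thm:rabinowitz0} already produces the continuum $\cC(u_i,\lambda_0)\subset cl(\cN)$ through $(u_i,\lambda_0)$. To obtain the alternative I would argue by contradiction in the standard Rabinowitz fashion: suppose $\cC(u_i,\lambda_0)$ is bounded and disjoint from $\cT\setminus(G(u_i)\times\{\lambda_0\})$. By complete continuity of $\nabla_u\eta$ together with the Fredholm property of $L$, bounded sets of solutions are precompact, so $\cC(u_i,\lambda_0)$ is compact; a Whyburn-type separation then yields a bounded, open, $G$-invariant set $\cO$ containing it with $\partial\cO\cap cl(\cN)=\emptyset$ and meeting $\cT$ only near $G(u_i)\times\{\lambda_0\}$. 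Sliding $\lambda$ across $\lambda_0$ inside $\cO$ and invoking homotopy invariance of $\degh$ forces the jump $\BIF_H(u_i,\lambda_0)$ to vanish, contradicting the hypothesis. Hence $\cC(u_i,\lambda_0)$ is either unbounded or meets $\cT$ at a point other than the initial orbit.

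\textbf{The index formula under (2).} The first step is to identify the intersection set. At any $(u_k,\lambda)$ with $\lambda\notin\Lambda_k$ the Hessian $\nabla^2_u\Phi(u_k,\lambda)$ is nondegenerate transversally to the orbit, so the implicit function theorem shows that the only solutions near $G(u_k)\times\{\lambda\}$ are trivial; therefore $\cC(u_i,\lambda_0)$ can touch $\cT$ only over points of $\bigcup_k\Lambda_k$. Compactness of $\cC(u_i,\lambda_0)$ together with the discreteness of the sets $\Lambda_k$ then yields the finite description $\cC(u_i,\lambda_0)\cap\cT=\bigcup_{k=1}^{q}\bigcup_{j=1}^{r_k}G(u_k)\times\{\lambda_{k,j}\}$ with each $\lambda_{k,j}\in\Lambda_k$, and each index $\BIF_H(u_k,\lambda_{k,j})$ well defined. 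For the summation I would isolate the compact continuum in a bounded $G$-invariant set $\cO$ whose boundary carries no nontrivial solutions and which meets $\cT$ only in disjoint tubes around the orbits $G(u_k)$, each tube truncated at non-bifurcation levels where the relevant $\degh$ are defined. Excision and additivity of $\degh$ then express the degree attached to $\partial\cO$ as a sum of local jumps across the bifurcation levels inside $\cO$, each jump being precisely $\BIF_H(u_k,\lambda_{k,j})$; an additivity-and-homotopy argument exactly as in Theorem 2.12 and Remark 2.13 of \cite{GolSte} identifies the total with $\Theta$, which is the asserted identity.

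\textbf{Main obstacle.} I expect the construction of the isolating neighbourhood $\cO$ to be the delicate point. It must be simultaneously bounded, open, $G$-invariant, free of nontrivial solutions on its boundary, and arranged so that $\cO\cap\cT$ is a finite disjoint union of tubes around the bifurcation orbits truncated at non-bifurcation levels; reconciling the Whyburn separation of the component $\cC(u_i,\lambda_0)$ inside the compact solution set with $G$-invariance and with the product structure needed to run the $\lambda$-homotopy is where the real work lies. The remaining subtlety is purely equivariant bookkeeping: one must use the admissibility of $(G,H)$ and the passage from $G$ to the slice group $H$ to guarantee that all local jumps are comparable in the single ring $U(H)$, so that their sum is meaningful and the cancellation recorded by $\degh$ indeed produces $\Theta$.
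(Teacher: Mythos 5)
Note first that the paper contains no self-contained proof of Theorem \ref{thm:AltRab}: it is asserted by ``combining the ideas of Theorems 2.11, 2.12 and Remark 2.13 of \cite{GolSte}'', so your proposal must be measured against the machinery of that cited paper. In outline you have reconstructed exactly that machinery: compactness of bounded solution sets (Fredholm $L$ of index $0$ plus completely continuous $\nabla_u\eta$), a Whyburn-type separation producing a bounded $G$-invariant isolating neighbourhood, homotopy invariance and additivity of the degree to telescope the local jumps across bifurcation levels, and nondegeneracy off $\Lambda_k$ to confine intersections with $\cT$ to those levels. So the route is the intended one.

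There is, however, one structural point that is wrong as you state it, and it is precisely the crux of the multi-orbit version. You propose to ``pass to the normal slice throughout'', replacing $\Phi$ by $\Psi=\Phi|_{\bW}$ and the $G$-degree by $\degh$. This cannot be done globally: the continuum $\cC(u_i,\lambda_0)$ and the isolating neighbourhood live in $\bH\times\bR$ and pass near several distinct orbits $G(u_k)$, each with its own slice $\bW_k=(T_{u_k}G(u_k))^{\perp}$, and there is no single $H$-invariant subspace in which the telescoping argument can be run. The only degree defined on the $\lambda$-sections of the isolating neighbourhood is the $G$-degree of $\nabla_u\Phi$, so what the telescoping argument actually produces is $\sum_{k,j}\BIF_G(G(u_k),\lambda_{k,j})=\Theta$ in $U(G)$. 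To pass to the asserted identity in $U(H)$ one needs the orbit-degree correspondence of \cite{GolSte}, by which each $\BIF_G(G(u_k),\lambda_{k,j})$ is the image of $\BIF_H(u_k,\lambda_{k,j})$ under the additive map $U(H)\to U(G)$ sending the generator $\chi_H(H/K^+)$ to $\chi_G(G/K^+)$, together with the fact that admissibility of $(G,H)$ makes this map injective (non-conjugate subgroups of $H$ remain non-conjugate in $G$, so distinct generators go to distinct generators). You gesture at this in your closing paragraph, but the logical direction matters: the sum identity is proved in $U(G)$ and pulled back to $U(H)$, not established in $U(H)$ directly; and this is also exactly where the hypothesis $(G_{u_1})=\cdots=(G_{u_q})$ enters, since it allows all slice indices to be read in the one ring $U(H)$. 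Two smaller slips: $\bW$ should be the slice at $u_i$, not at a nonexistent $u_0$; and the finiteness of $\cC(u_i,\lambda_0)\cap\cT$ requires each $\lambda_{k,j}$ to be isolated in $\Lambda_k$ --- in this abstract setting that discreteness is an assumption inherited from \cite{GolSte} (it is needed even to define $\BIF_H(u_k,\lambda_{k,j})$), not a consequence of compactness of the continuum.
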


If there exists a continuum satisfying the assertion of the above theorem, we say that the global bifurcation phenomenon occurs from $G(u_i) \times \{\lambda_0\}$. Obviously, the global bifurcation from the orbit implies a bifurcation from this orbit.

\begin{Remark}
Note, that if $\BIF_H(u_i, \lambda_0)\neq \Theta$ and we are able to exclude the possibility (2) of the above theorem, then we obtain the existence of an unbounded set of nontrivial solutions, bifurcating from the orbit $G(u_i) \times \{\lambda_0\}.$
\end{Remark}

\section{Elliptic system with one critical orbit}
In this section we consider a symmetric, parametrised elliptic system with an orbit of constant solutions. We study existence and unboundedness of continua bifurcating from this orbit.

More precisely, we fix a compact and connected Lie group $\Gamma$ and consider the following system of equations:
\begin{equation}\label{eq:system}
\left\{\begin{array}{lclcl}
a_1\Delta_{S^{N-1}} u_1(x)&=&\nabla_{u_1}F(u(x),\lambda) \\
a_2\Delta_{S^{N-1}} u_2(x)&=&\nabla_{u_2}F(u(x),\lambda) \\
\vdots&\vdots&&\ \ \ \text{on}& S^{N-1}, \\
a_p\Delta_{S^{N-1}} u_p(x) &=&\nabla_{u_p}F(u(x),\lambda)
\end{array}
\right.
\end{equation}
where
\begin{enumerate}
\item[(a1)] $F\in C^2(\bR^p\times\bR,\bR)$,
\item[(a2)] $u_0$ is a critical point of $F$ for all $\lambda \in \bR$ and  $\nabla_u^2 F(u_0,\lambda)= \lambda B$, where $B=\diag(b_1,\ldots,b_p)$, $b_i\in\{0,1\}$.
\item[(a3)] $\bR^p$ is an orthogonal $\Gamma$-representation and $F$ is $\Gamma$-invariant with respect to the first variable, i.e.  $F(\gamma u,\lambda) =F(u,\lambda)$ for every $\gamma \in \Gamma$, $u\in\bR^p$, $\lambda\in\bR$.
\item[(a4)] $a_i\in\{-1,1\}$ and $A=\diag(a_1,\ldots,a_p)$ is a $\Gamma$-automorphism, i.e. $A\gamma=\gamma A$ for every $\gamma\in\Gamma$.
\item[(a5)] $\Gamma_{u_0}=\{e\}$.
\end{enumerate}
From (a2) and (a3) it follows that $\nabla_u F(\gamma u_0,\lambda)=0$ for every $\gamma\in \Gamma$, $\lambda\in\bR$, i.e. $\Gamma(u_0)\subset(\nabla_u F(\cdot,\lambda))^{-1}(0)$ for every $\lambda\in\bR$. We additionally assume that:
\begin{enumerate}
\item[(a6)] the orbit $\Gamma(u_0)$ is non-degenerate for every $\lambda\in\bR$, i.e.  $\dim \ker \nabla_u^2 F (u_0,\lambda) = \dim \Gamma (u_0)$.
\end{enumerate}

Note that from the assumption (a6) it follows that the number of $b_i=0$ at the diagonal of $B$ is equal to $\dim \Gamma (u_0)$.

\subsection{Variational setting}\label{subsec:variational}

Let $H^1(S^{N-1})$ denote the Sobolev space with the inner product
\[
\langle \eta,\xi\rangle_{H^1(S^{N-1})}=\int_{S^{N-1}}(\nabla \eta(x), \nabla \xi(x)) +\eta(x) \cdot \xi(x) d\sigma.
\]
Consider the space $\bH=\bigoplus_{i=1}^p H^1(S^{N-1})$ with the inner product given by
\begin{equation*}\label{eq:iloczyn}
\langle u, v\rangle_{\bH} = \sum_{i=1}^p \langle u_i,v_i\rangle_{H^1(S^{N-1})}.
\end{equation*}

Note that we can define an action of the group $G=\Gamma\times SO(2)$ on $\bH$ by $((\gamma,\alpha),u)(x)\mapsto \gamma u(\alpha x)$ for $(\gamma,\alpha)\in G$, $u\in\bH$, $x\in S^{N-1}$, where $\alpha x$ is defined by $(\alpha, (x_1,\ldots,x_N))\mapsto (\alpha (x_1,x_2),x_3,\ldots,x_N)$. Then $\bH$ is an orthogonal $G$-representation and so is the space $\bH \times \bR$ with the action given by $(g,(u,\lambda))\mapsto (gu,\lambda)$ for  $g\in G, u\in \bH,\lambda\in\bR$.

It is known that weak solutions of the system \eqref{eq:system} are in one-to-one correspondence with critical points (with respect to $u$) of the functional $\Phi \colon \bH \times \bR \to \bR$ given by
\begin{equation}\label{Phi}
\Phi(u,\lambda)=\frac{1}{2}\int_{S^{N-1}}\sum^p_{i=1}(-a_i|\nabla u_i(x)|^2)d\sigma-\int_{S^{N-1}}F(u(x),\lambda)d\sigma.
\end{equation}
From the assumptions (a3) and (a4) it follows that $\Phi$ is $G$-invariant.

Denote by $\tilde{u}_0\in\bH$ the constant function $\tilde{u}_0\equiv u_0$  and observe that $G_{\tilde{u}_0}=\{e\} \times SO(2)$.
Analogously as in Lemmas 3.2  and 3.3 of \cite{GolSte} we obtain

\begin{Lemma}\label{lem:postacPhi}
Under the assumptions (a1)--(a4):
\begin{equation*}
\nabla_u\Phi(u,\lambda)=L(u-\tilde{u}_0)+L_{\lambda B} (u-\tilde{u}_0)- \nabla_u\eta_0(u-\tilde{u}_0,\lambda),
\end{equation*}
where
\begin{enumerate}
\item $L\colon\bH\to\bH$ is a linear self-adjoint, bounded Fredholm operator of index 0 given by $L(u_1,\ldots,u_p)=(-a_1u_1,\ldots,-a_pu_p)$,
\item $L_{\lambda B}\colon\bH\to\bH$ is a linear self-adjoint, bounded, completely continuous operator given by
$$\langle L_{\lambda B}u,v\rangle_{\bH}=\int_{\sphere} (Au(x)-\lambda Bu(x),v(x))\, d\sigma$$
for all $v\in \bH$,
\item $\nabla_u\eta_0\colon\bH\times\bR\to\bH$ is a completely continuous operator such that $\nabla_u\eta_0(0,\lambda)=0,\ \nabla^2_u\eta_0(0,\lambda)=0$ for every $\lambda\in\bR$.
\end{enumerate}
\end{Lemma}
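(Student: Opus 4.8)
The plan is to compute the gradient $\nabla_u\Phi(u,\lambda)$ directly from the formula \eqref{Phi} and to read off the three summands by isolating, in turn, the top-order (Dirichlet) part, the linear lower-order part, and the genuinely nonlinear remainder. Throughout I write $w=u-\tilde{u}_0$ and use that $\tilde{u}_0\equiv u_0$ is constant, so $\nabla u_i=\nabla w_i$ and $u(x)=u_0+w(x)$ pointwise. Differentiating \eqref{Phi} in a direction $v\in\bH$ gives
\begin{equation*}
D_u\Phi(u,\lambda)[v]=-\sum_{i=1}^p a_i\int_{\sphere}\nabla w_i\cdot\nabla v_i\,d\sigma-\int_{\sphere}(\nabla_u F(u_0+w,\lambda),v)\,d\sigma,
\end{equation*}
so the task reduces to representing each integral through the $\bH$-inner product.

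First I would treat the Dirichlet term. Adding and subtracting the zeroth-order part of the $H^1$-product, I obtain $-\sum_i a_i\int_{\sphere}\nabla w_i\cdot\nabla v_i\,d\sigma=\langle Lw,v\rangle_{\bH}+\int_{\sphere}(Aw,v)\,d\sigma$, where $L$ is exactly the diagonal sign operator of item (1). Since $a_i\in\{-1,1\}$, $L$ is a bounded involution ($L^2=\mathrm{Id}$), hence an isomorphism, and it is self-adjoint with respect to $\langle\cdot,\cdot\rangle_{\bH}$ because it multiplies the $i$-th component by the real scalar $-a_i$; an isomorphism is Fredholm of index $0$, which settles (1).

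Next I would expand the nonlinearity at $u_0$. By (a2) and the fact that $u_0$ is a critical point of $F(\cdot,\lambda)$, we have $\nabla_u F(u_0,\lambda)=0$ and $\nabla_u^2F(u_0,\lambda)=\lambda B$, so setting $R(w,\lambda):=\nabla_u F(u_0+w,\lambda)-\lambda Bw$ yields $R(0,\lambda)=0$ and $D_wR(0,\lambda)=0$. Substituting $\nabla_u F(u_0+w,\lambda)=\lambda Bw+R(w,\lambda)$ and combining the two lower-order linear contributions gives $\int_{\sphere}(Aw,v)-\int_{\sphere}(\lambda Bw,v)=\int_{\sphere}((A-\lambda B)w,v)=\langle L_{\lambda B}w,v\rangle_{\bH}$, which is precisely the defining relation of $L_{\lambda B}$. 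That $L_{\lambda B}$ is well-defined, bounded and self-adjoint follows from the Riesz theorem applied to the symmetric bounded bilinear form $(u,v)\mapsto\int_{\sphere}((A-\lambda B)u,v)\,d\sigma$ (bounded because $\bH\hookrightarrow L^2$ continuously); its complete continuity follows since if $u_n\rightharpoonup u$ in $\bH$ then $u_n\to u$ in $L^2$ by the compact Sobolev embedding, whence $\|L_{\lambda B}(u_n-u)\|_{\bH}\le C\|u_n-u\|_{L^2}\to0$. Finally I would define $\eta_0(w,\lambda)=\int_{\sphere}\big(F(u_0+w,\lambda)-F(u_0,\lambda)-\tfrac12\lambda(Bw,w)\big)\,d\sigma$, so that $\langle\nabla_u\eta_0(w,\lambda),v\rangle_{\bH}=\int_{\sphere}(R(w,\lambda),v)\,d\sigma$; then $\nabla_u\eta_0(0,\lambda)=0$ and $\nabla_u^2\eta_0(0,\lambda)=0$ are immediate from $R(0,\lambda)=0$ and $D_wR(0,\lambda)=0$. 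Collecting the three pieces gives exactly $\nabla_u\Phi(u,\lambda)=Lw+L_{\lambda B}w-\nabla_u\eta_0(w,\lambda)$.

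The one step that is more than bookkeeping, and which I expect to be the main obstacle, is the complete continuity of $\nabla_u\eta_0$. Here I would factor the operator as $\nabla_u\eta_0(\cdot,\lambda)=j^\ast\circ N_R(\cdot,\lambda)\circ j$, where $j\colon\bH\hookrightarrow L^q(\sphere)^p$ is the Sobolev embedding, $N_R$ is the superposition (Nemytskii) operator associated with $R$, and $j^\ast$ is its adjoint into $\bH$. On the compact manifold $\sphere$ the embedding $j$ is compact, hence so is $j^\ast$, and since $F\in C^2$ the operator $N_R(\cdot,\lambda)$ is continuous between the relevant Lebesgue spaces; the composition of a compact map with continuous maps is completely continuous. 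This is the only point at which compactness of the embedding (rather than mere boundedness) is essential, and it is exactly the mechanism used in Lemmas 3.2 and 3.3 of \cite{GolSte} to which the statement refers, the remaining verifications being the routine computations indicated above.
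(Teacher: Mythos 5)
Your proof is correct and is essentially the argument the paper intends: the paper gives no proof of this lemma, only the remark that it is obtained ``analogously as in Lemmas 3.2 and 3.3 of \cite{GolSte}'', and those lemmas proceed exactly as you do --- direct differentiation of $\Phi$, splitting off the zeroth-order part of the $H^1$-product to produce $L$, Taylor expansion of $\nabla_u F$ at $u_0$ using (a2) to produce $L_{\lambda B}$ and the remainder $R$, Riesz representation for the lower-order bilinear forms, and compactness of the embedding of $\bH$ into Lebesgue spaces on $S^{N-1}$ for the complete continuity claims. The one caveat is your assertion that $F\in C^2$ alone makes the Nemytskii operator $N_R$ continuous between the relevant Lebesgue spaces: by Krasnoselskii's theorem this requires a subcritical growth bound on $\nabla_u F$ relative to the Sobolev exponents of $H^1(S^{N-1})$, a hypothesis that (a1) does not state and the paper leaves implicit by deferring to \cite{GolSte}, so your argument shares exactly the same unstated assumption as the paper's rather than introducing a new gap.
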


Denote by $\sigma(-\Delta_{S^{N-1}})=\{0=\beta_{0}< \beta_1<\beta_2<\ldots\}$ the set of all eigenvalues of the Laplace--Beltrami operator $-\Delta_{S^{N-1}}$ and put $\sigma^-(-\Delta_{S^{N-1}})=\{-\beta_m:\beta_m\in\sigma(-\Delta_{S^{N-1}})\}$.

Let $\cH^N_m$ denote the linear space of harmonic, homogeneous polynomials of $N$ independent variables, of degree $m$, restricted to the sphere $S^{N-1}.$
It is known that $\cH^N_m$ is an eigenspace of $-\Delta_{S^{N-1}}$ with the corresponding eigenvalue $\beta_m=m\cdot(m+N-2)$, $m\in\bN\cup\{0\}$, see \cite{Gurarie}, \cite{Shimakura}.

Denote by $n_-$ (respectively, $n_+$) the number of  coefficients $k$ such that $a_k=-1$, $b_k=1$ (respectively, $a_k=1$ and $b_k=1$). Similarly, by $n_-^0$ (respectively, $n_+^0$) we denote the number of coefficients $k$ such that $a_k=-1$, $b_k=0$ (respectively, $a_k=1$ and $b_k=0$).
Note that $n_-+n_+>0$.

Below we describe the set $\sigma(L+L_{\lambda B})$, i.e. the spectrum of $L+L_{\lambda B}$.

\begin{Lemma}\label{lem:spectrum}
Under the above assumptions:
\begin{align*}
\sigma(L+L_{\lambda B})\subset
\left\{\frac{\beta_m-\lambda}{1+\beta_m}, \frac{-\beta_m-\lambda}{1+\beta_m}, \frac{\beta_m}{1+\beta_m}, \frac{-\beta_m}{1+\beta_m} \colon \beta_m\in \sigma(-\Delta_{\sphere})\right\}. \end{align*}
The multiplicities of the eigenvalues of $L+L_{\lambda B}$ are the following:
\[ \mu\left(\frac{\beta_m-\lambda}{1+\beta_m}\right)=n_-,\ \ \mu\left(\frac{-\beta_m-\lambda}{1+\beta_m}\right)=n_+,\  \  \mu\left(\frac{\beta_m}{1+\beta_m}\right)=n_-^0,\ \ \mu\left(\frac{-\beta_m}{1+\beta_m}\right)=n_+^0. \]
\end{Lemma}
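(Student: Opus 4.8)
The plan is to diagonalise $L+L_{\lambda B}$ simultaneously by exploiting the eigenspace decomposition of $-\Delta_{\sphere}$ componentwise. First I would recall that $H^1(\sphere)=\overline{\bigoplus_{m}\cH^N_m}$, where $\cH^N_m$ is the eigenspace of $-\Delta_{\sphere}$ for the eigenvalue $\beta_m$, and that distinct $\cH^N_m$ are mutually orthogonal in $L^2(\sphere)$. The decisive computation is the relation between the two inner products on a single eigenspace: for $\eta,\xi\in\cH^N_m$, integration by parts on the closed manifold $\sphere$ gives
$$\langle\eta,\xi\rangle_{H^1(\sphere)}=\int_{\sphere}(\nabla\eta,\nabla\xi)+\eta\xi\,d\sigma=\int_{\sphere}\big((-\Delta_{\sphere}\eta)\xi+\eta\xi\big)\,d\sigma=(1+\beta_m)\int_{\sphere}\eta\xi\,d\sigma.$$
In particular distinct eigenspaces are orthogonal in $\bH$ as well, so setting $V_m=\bigoplus_{i=1}^p\cH^N_m\subset\bH$ we obtain an orthogonal splitting $\bH=\overline{\bigoplus_m V_m}$.

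Next I would show that both summands preserve this splitting and act as scalars on each component. The operator $L$ is clear: it multiplies the $i$-th coordinate by $-a_i$, hence preserves each $V_m$. For $L_{\lambda B}$, fix $u\in V_m$ with coordinates $u_i\in\cH^N_m$. Since $(A-\lambda B)u$ has $i$-th coordinate $(a_i-\lambda b_i)u_i\in\cH^N_m$ and distinct eigenspaces are $L^2$-orthogonal, the right-hand side $\int_{\sphere}(Au-\lambda Bu,v)\,d\sigma$ vanishes whenever $v\in V_k$ with $k\neq m$; by the defining formula this forces $L_{\lambda B}u\in V_m$. Writing $L_{\lambda B}u=w$ with $w_i\in\cH^N_m$ and testing against arbitrary $v\in V_m$, the relation above turns the definition into $(1+\beta_m)\langle w_i,v_i\rangle_{L^2}=(a_i-\lambda b_i)\langle u_i,v_i\rangle_{L^2}$ for every $i$, whence $w_i=\tfrac{a_i-\lambda b_i}{1+\beta_m}u_i$. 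Thus on $V_m$ the operator $L_{\lambda B}$ multiplies the $i$-th coordinate by $(a_i-\lambda b_i)/(1+\beta_m)$.

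Combining the two, on $V_m$ the operator $L+L_{\lambda B}$ acts on the $i$-th coordinate as multiplication by
$$-a_i+\frac{a_i-\lambda b_i}{1+\beta_m}=\frac{-a_i\beta_m-\lambda b_i}{1+\beta_m}.$$
It then remains to enumerate the four possibilities for $(a_i,b_i)\in\{-1,1\}\times\{0,1\}$: these give the values $(\beta_m-\lambda)/(1+\beta_m)$, $(-\beta_m-\lambda)/(1+\beta_m)$, $\beta_m/(1+\beta_m)$ and $-\beta_m/(1+\beta_m)$, the number of indices $i$ (equivalently, copies of $\cH^N_m$) realising each value being exactly $n_-$, $n_+$, $n_-^0$ and $n_+^0$ respectively; summing over $m$ and taking closures yields the claimed inclusion for $\sigma(L+L_{\lambda B})$ together with the stated multiplicities. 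I expect the only genuine subtlety to be the first step, namely the passage from the $L^2$-defined operator $L_{\lambda B}$ to an explicit scalar action, which hinges entirely on the eigenvalue-dependent rescaling $\langle\cdot,\cdot\rangle_{H^1(\sphere)}=(1+\beta_m)\langle\cdot,\cdot\rangle_{L^2(\sphere)}$ between the two inner products; once this is in hand the remainder is a finite case check, and the inclusion (rather than equality) simply accounts for the fact that some of the integers $n_\pm,n_\pm^0$ may vanish.
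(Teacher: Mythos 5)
Your proof is correct and follows essentially the same (standard) route that the paper itself invokes by omitting the proof and citing Lemma 3.2 of \cite{GolKlu}: the orthogonal decomposition of $\bH$ into the blocks $V_m=\bigoplus_{i=1}^p\cH^N_m$, the key identity $\langle\cdot,\cdot\rangle_{H^1(\sphere)}=(1+\beta_m)\langle\cdot,\cdot\rangle_{L^2(\sphere)}$ on $\cH^N_m$, and the resulting componentwise scalar action $\frac{-a_i\beta_m-\lambda b_i}{1+\beta_m}$, whose four cases give exactly the listed values with multiplicities $n_-,n_+,n_-^0,n_+^0$ (counted in copies of $\cH^N_m$). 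One small correction: drop the phrase ``taking closures''---the displayed set is not closed (for $b_i=0$ the values accumulate at $-a_i$, a point of the essential spectrum of $L+L_{\lambda B}$ that for generic $\lambda$ does not belong to the set), so the inclusion must be read as a statement about eigenvalues (point spectrum), which is precisely what your computation establishes and all that is used later, since the subsequent homotopy arguments take place in the finite-dimensional spaces $\bH^n\cap\bW$.
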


The proof of this lemma is standard, see for example the proof of Lemma 3.2 of \cite{GolKlu}. Note that some of the multiplicities in the above lemma may be zeros.

Define a $G$-equivariant approximation scheme $ \{\pi_n\colon\bH \rightarrow \bH\colon n \in \bN \cup \{0\} \}$ on $\bH$ by
\begin{enumerate}
\item[(b1)] $\bH^0 = \{0\},$
\item[(b2)] $ \bH^n = \bigoplus_{j=1}^{p} \bigoplus_{k=1}^n  \cH^N_{k-1}$,
\item[(b3)] $\pi_n \colon \bH \rightarrow \bH$ is a natural $G$-equivariant projection such that $ \pi_n(\bH) = \bH^n$ for $n \in \bN\cup\{0\}.$
\end{enumerate}
Note that from the definitions of $L$ and $\pi_n$ we have $L \circ \pi_n=\pi_n \circ L$ for all $n \in \bN \cup \{0\}$ and $\ker L = \bH^0$.

\subsection{Global bifurcation}
From the considerations in the previous subsection it follows that $\Phi$ defined by \eqref{Phi} satisfies the assumptions of Section \ref{sec:global}.
In particular, there is a critical orbit $G(\tilde{u}_0)$ of this functional for every $\lambda\in\bR$.
We are going to study the phenomenon of global bifurcation from the set of trivial solutions $\cT=G(\tilde{u}_0)\times\bR$.

Let us denote by $\Lambda$ the set of all $\lambda \in\bR$ such that the orbit $G(\tilde{u}_0)\times\{\lambda\}$ is degenerate, i.e. such that $\dim \ker \nabla^2_u\Phi(\tilde{u}_0, \lambda) >\dim(G(\tilde{u}_0) \times \{\lambda\}).$

In the following lemma we give the necessary condition for a global bifurcation.

\begin{Lemma}\label{lem:nec}
If a bifurcation of solutions of \eqref{eq:system} occurs from the orbit $G(\tilde{u}_0) \times \{\lambda_0\}$, then $\lambda_0 \in \Lambda.$ Moreover,
$$\Lambda=
\left\{
\begin{array}{ll}
\sigma(-\Delta_{S^{N-1}}),& \text{ when } n_->0, n_+=0 \\
\sigma^-(-\Delta_{S^{N-1}}),& \text{ when } n_+>0, n_-=0 \\
\sigma(-\Delta_{S^{N-1}})\cup \sigma^-(-\Delta_{S^{N-1}}),& \text{ when } n_-n_+>0. \\
\end{array}
\right.$$
\end{Lemma}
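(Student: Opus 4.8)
The plan is to characterise $\Lambda$ by using the necessary condition for bifurcation recalled in Section~\ref{sec:global} together with the explicit spectral description provided by Lemma~\ref{lem:spectrum}. By the discussion in Section~\ref{sec:global} (Fact~2.10 of \cite{GolSte}), a bifurcation from $G(\tilde{u}_0)\times\{\lambda_0\}$ can occur only when $\dim\ker\nabla^2_u\Phi(\tilde{u}_0,\lambda_0)>\dim(G(\tilde{u}_0)\times\{\lambda_0\})$, which is exactly the defining condition of $\Lambda$; this gives the first assertion immediately. So the real content is to compute $\Lambda$ explicitly. First I would note that by Lemma~\ref{lem:postacPhi} we have $\nabla^2_u\Phi(\tilde{u}_0,\lambda)=L+L_{\lambda B}$ (the term $\nabla^2_u\eta_0$ vanishes at the orbit), so $\ker\nabla^2_u\Phi(\tilde{u}_0,\lambda)=\ker(L+L_{\lambda B})$, and its dimension is governed by the zero eigenvalues listed in Lemma~\ref{lem:spectrum}.

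Next I would examine when each of the four families of eigenvalues vanishes. The eigenvalue $\tfrac{-\beta_m}{1+\beta_m}$ (multiplicity $n_+^0$) and $\tfrac{\beta_m}{1+\beta_m}$ (multiplicity $n_-^0$) are zero precisely when $\beta_m=0$, i.e. only for $m=0$; these correspond to the $b_k=0$ directions and, by assumption (a6), contribute exactly $\dim\Gamma(u_0)$ to the kernel for every $\lambda$. Since the orbit $G(\tilde{u}_0)$ has $\dim(G(\tilde{u}_0)\times\{\lambda_0\})=\dim\Gamma(u_0)+1$ (the extra $1$ coming from the $SO(2)$-action, as $G_{\tilde u_0}=\{e\}\times SO(2)$ has codimension one in $G=\Gamma\times SO(2)$ relative to $\Gamma(u_0)$; more precisely $\dim G(\tilde u_0)=\dim\Gamma(u_0)$ and the $\times\{\lambda_0\}$ adds nothing, so one must track this carefully), the $\lambda$-independent contribution alone does not make the orbit degenerate. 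Hence degeneracy at a given $\lambda_0$ forces one of the two $\lambda$-dependent families $\tfrac{\beta_m-\lambda}{1+\beta_m}$ (multiplicity $n_-$) or $\tfrac{-\beta_m-\lambda}{1+\beta_m}$ (multiplicity $n_+$) to vanish for some $\beta_m\in\sigma(-\Delta_{\sphere})$.

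I would then read off the condition for these to be zero. The eigenvalue $\tfrac{\beta_m-\lambda}{1+\beta_m}=0$ exactly when $\lambda=\beta_m\in\sigma(-\Delta_{\sphere})$, and this contributes a nonzero increment to $\dim\ker$ only if $n_->0$. Similarly $\tfrac{-\beta_m-\lambda}{1+\beta_m}=0$ exactly when $\lambda=-\beta_m\in\sigma^-(-\Delta_{\sphere})$, contributing only if $n_+>0$. Combining these three cases yields the three branches of the formula for $\Lambda$: the set $\sigma(-\Delta_{\sphere})$ when $n_->0,\,n_+=0$; the set $\sigma^-(-\Delta_{\sphere})$ when $n_+>0,\,n_-=0$; and their union when $n_-n_+>0$. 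The hypothesis $n_-+n_+>0$ guarantees at least one of the $\lambda$-dependent families is present, so $\Lambda$ is nonempty and these cases are exhaustive.

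The step I expect to be the main obstacle is the careful bookkeeping of the dimension count at the degenerate levels: one must verify that the $\lambda$-independent kernel contribution is precisely $\dim\Gamma(u_0)$, matching $\dim G(\tilde{u}_0)$, so that it never by itself produces a strict inequality, while at each $\lambda_0\in\Lambda$ the additional $\lambda$-dependent eigenvalue crossing zero strictly raises $\dim\ker\nabla^2_u\Phi(\tilde{u}_0,\lambda_0)$ above $\dim(G(\tilde{u}_0)\times\{\lambda_0\})$. This uses assumption (a6) in an essential way and requires knowing that the $SO(2)$-orbit direction is already accounted for inside the $m=0$ harmonic space. Once this is pinned down, the identification of $\Lambda$ with the stated sets is a direct translation of the vanishing conditions on the eigenvalues from Lemma~\ref{lem:spectrum}.
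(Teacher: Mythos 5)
Your proposal follows the route the paper intends: the paper gives no proof of Lemma \ref{lem:nec}, deferring to the analogous Lemma 3.1 of \cite{GolKluSte}, and the intended argument is precisely the one you outline --- the necessary condition from Section \ref{sec:global} reduces everything to counting zero eigenvalues of $\nabla^2_u\Phi(\tilde{u}_0,\lambda)=L+L_{\lambda B}$ via Lemma \ref{lem:spectrum}, with assumption (a6) guaranteeing that the $\lambda$-independent part of the kernel (the $b_k=0$ directions over $\cH^N_0$) has dimension exactly $n_-^0+n_+^0=\dim\Gamma(u_0)$.

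The one place your write-up goes wrong --- and then contradicts itself --- is the dimension of the trivial orbit. You first assert $\dim\left(G(\tilde{u}_0)\times\{\lambda_0\}\right)=\dim\Gamma(u_0)+1$, with ``the extra $1$ coming from the $SO(2)$-action'', justified by the claim that $G_{\tilde{u}_0}=\{e\}\times SO(2)$ has codimension one in $G$; both claims are false. Since $\tilde{u}_0$ is a constant function, $SO(2)$ fixes it, so $G(\tilde{u}_0)\cong G/G_{\tilde{u}_0}=(\Gamma\times SO(2))/(\{e\}\times SO(2))\cong\Gamma$, which by (a5) has dimension $\dim\Gamma=\dim\Gamma(u_0)$; the codimension of $G_{\tilde{u}_0}$ in $G$ is $\dim\Gamma$, not one. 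This is not cosmetic: with the erroneous ``$+1$'' count, membership in $\Lambda$ would require the $\lambda$-dependent kernel increment to be at least $2$, and then $\lambda_0=0$ --- where the increment is $(n_-+n_+)\cdot\dim\cH^N_0=n_-+n_+$ --- would be wrongly excluded from $\Lambda$ whenever $n_-+n_+=1$, contradicting the very formula being proved. Your own parenthetical (``more precisely $\dim G(\tilde{u}_0)=\dim\Gamma(u_0)$'') and your final paragraph adopt the correct count, so the argument as a whole does close; but a clean proof should state the orbit dimension once, correctly, rather than both ways, since the strict inequality defining $\Lambda$ hinges on it.
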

The proof of this lemma is similar to the one of Lemma 3.1 of \cite{GolKluSte}, so we omit it.

Now we turn our attention to the sufficient condition.  Fix $\lambda_0\in\Lambda$ and note that from the above lemma it follows that the set $\Lambda$ is discrete, therefore we can choose $\varepsilon >0$ such that $\Lambda \cap [\lambda_0 - \varepsilon,\lambda_0 + \varepsilon ]=\{\lambda_0\}$. Hence there exists a $G$-invariant open set $\Omega\subset\bH$ satisfying $(\nabla_u\Phi(\cdot, \lambda_0 \pm \varepsilon))^{-1}(0)\cap \Omega= G(\tilde{u}_0)$. Put $\bW=(T_{\tilde{u}_0} G(\tilde{u}_0))^{\bot}$,  $H=G_{\tilde{u}_0}=\{e\}\times SO(2)$  and $\Psi=\Phi_{|\bW}$. It is known that $\bW$ is an orthogonal $H$-representation. Moreover, the pair $(G,H)$ is admissible, see Lemma 2.1 of \cite{PRS}.

Note that the sequence $\{\tilde{\pi}_n\colon \bW \to \bW \colon n \in \bN \cup \{0\}\}$ of $H$-equivariant orthogonal projections, satisfying $\tilde{\pi}_n(\bW)=\bH^n \cap \bW$, is an $H$-equivariant approximation scheme on $\bW.$ Therefore the degree $\nabla_H\text{-}\deg(\nabla\Psi(\cdot,\lambda_0 \pm \varepsilon), \Omega \cap \bW)$ is well-defined.

To study the global bifurcation phenomenon from the set $\cT$ we are going to apply Theorem \ref{thm:AltRab}. To this end we first investigate the bifurcation index
\begin{multline*}\BIF_{H}(\tilde{u}_0,\lambda_0)= \nabla_{H}\text{-}\deg(\nabla_u\Psi(\cdot,\lambda_0+\varepsilon),\Omega \cap \bW)+\\- \nabla_{H}\text{-}\deg(\nabla_u\Psi(\cdot,\lambda_0-\varepsilon),\Omega \cap \bW)\in U(H).
\end{multline*}
Since  $H=\{e\}\times SO(2)$,  there is an obvious one-to-one correspondence between the indices $\BIF_{H}(\tilde{u}_0,\lambda_0)$ and
\begin{multline}\label{eq:so(2)index}
\BIF_{SO(2)}(\tilde{u}_0,\lambda_0)=\nabla_{SO(2)}\text{-}\deg(\nabla_u\Psi(\cdot,\lambda_0+\varepsilon),\Omega \cap \bW)+\\- \nabla_{SO(2)}\text{-}\deg(\nabla_u\Psi(\cdot,\lambda_0-\varepsilon),\Omega \cap \bW)\in U(SO(2)).
\end{multline}
Here, to consider $\nabla_{SO(2)}\text{-}\deg(\cdot,\cdot)$, we use $\{\tilde{\pi}_n\colon \bW \to \bW \colon n \in \bN \cup \{0\}\}$ as the $SO(2)$-equivariant approximation scheme.
From now on we study the index given by \eqref{eq:so(2)index} for $\lambda_0 \in \Lambda$. To shorten the notation we denote it by $\BIF(\lambda_0)$.

Put $\bV(n)=\bigoplus_{k=0}^n \cH^N_k$ and denote by $B(\bE)$ the open unit ball in a linear space $\bE$.
\begin{Lemma}\label{lem:indices}
Fix $\beta_m\in \sigma(-\Delta_{S^{N-1}})\setminus\{0\}$.
\begin{enumerate}
\item If $n_->0$, then
\[
\BIF(\beta_{m})= \nabla_{SO(2)}\text{-}\deg(-Id, B(\bV(m-1)))^{n_-}\star
\left( \nabla_{SO(2)}\text{-}\deg(-Id, B(\cH^N_m))^{n_-}-\bI \right).
\]
\item If $n_+>0$, then
\[
\BIF(-\beta_{m})=\nabla_{SO(2)}\text{-}\deg(-Id, B(\bV(m)))^{-n_+}\star\left( \nabla_{SO(2)}\text{-}\deg(-Id, B(\cH^N_m))^{n_+}-\bI \right).
\]
\item $ \BIF(0)=\left( (-1)^{n_-}-(-1)^{n_+} \right)\cdot \bI. $
\end{enumerate}
\end{Lemma}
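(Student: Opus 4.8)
The plan is to reduce the computation of $\BIF(\lambda_0)$ to the degree of a single linear isomorphism and then to read the answer off the spectral data of Lemma \ref{lem:spectrum}. By Lemma \ref{lem:postacPhi}(3) the operator $\nabla_u\eta_0$ is completely continuous with $\nabla^2_u\eta_0(0,\lambda)=0$, so the linearization of $\nabla_u\Psi(\cdot,\lambda)$ at the point $\tilde u_0\in\bW$ is precisely the restriction $T_\lambda:=(L+L_{\lambda B})|_{\bW}$. For $\lambda=\lambda_0\pm\varepsilon\notin\Lambda$ this restriction is an isomorphism of $\bW$, since the kernel of $\nabla^2_u\Phi(\tilde u_0,\lambda)$ on $\bH$ reduces to $T_{\tilde u_0}G(\tilde u_0)$, which has been removed in passing to the normal space. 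Invoking the standard property that near a nondegenerate zero the degree for strongly indefinite functionals equals the degree of its linearization (see \cite{GolRyb2011}, \cite{GolSte}), I obtain
\[
\degso(\nabla_u\Psi(\cdot,\lambda_0\pm\varepsilon),\Omega\cap\bW)=\degso(T_{\lambda_0\pm\varepsilon},B(\bW)),
\]
so that $\BIF(\lambda_0)=\degso(T_{\lambda_0+\varepsilon},B(\bW))-\degso(T_{\lambda_0-\varepsilon},B(\bW))$.

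The main tool is the product formula for the degree of a linear isomorphism of the form $L+(\text{compact})$: relative to the reference operator $L$, the degree $\degso(T_\lambda,B(\bW))$ equals the $\star$-product of the factors $\degso(-Id,B(V))$ taken over the finitely many eigenspaces $V$ on which $T_\lambda$ is negative while $L$ is positive, together with the factors $\degso(-Id,B(V))^{-1}$ over those on which $T_\lambda$ is positive while $L$ is negative. Since $L$ acts on the $k$-th coordinate of $\cH^N_{m'}$ by $-a_k$, the reference $L$ is positive exactly on the coordinates with $a_k=-1$ and negative on those with $a_k=1$. Using Lemma \ref{lem:spectrum} I would tabulate, for each branch $\tfrac{\pm\beta_{m'}-\lambda}{1+\beta_{m'}}$ and $\tfrac{\pm\beta_{m'}}{1+\beta_{m'}}$, its sign at $\lambda=\lambda_0$ and compare with the sign of $L$. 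For $\varepsilon$ small exactly one branch crosses zero as $\lambda$ passes $\lambda_0$: the branch $\tfrac{\beta_m-\lambda}{1+\beta_m}$ (multiplicity $n_-$) at $\lambda_0=\beta_m$, the branch $\tfrac{-\beta_m-\lambda}{1+\beta_m}$ (multiplicity $n_+$) at $\lambda_0=-\beta_m$, and both $m'=0$ branches (total multiplicity $n_-+n_+$) at $\lambda_0=0$.

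I would then split $\bW=\bW'\oplus E$, where $E$ is the crossing eigenspace and $\bW'$ carries all branches of fixed sign, and apply multiplicativity. On $\bW'$ the operators $T_{\lambda_0+\varepsilon}$ and $T_{\lambda_0-\varepsilon}$ agree; denote the common restriction by $T'$. A direct sign count shows that, relative to $L$, the only discrepancies on $\bW'$ occur on the branch $\tfrac{\beta_{m'}-\lambda}{1+\beta_{m'}}$ for $m'<m$ (case 1, flips $+\!\to\!-$) or on $\tfrac{-\beta_{m'}-\lambda}{1+\beta_{m'}}$ for $m'<m$ (case 2, flips $-\!\to\!+$); here assumption (a6) is essential, since it identifies the $\cH^N_0$-part of $\bW$ with the coordinates satisfying $b_k=1$ and so places the constant eigenspaces correctly in the count. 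This yields $\degso(T',B(\bW'))=\degso(-Id,B(\bV(m-1)))^{n_-}$ in case 1 and $\degso(-Id,B(\bV(m-1)))^{-n_+}$ in case 2. On $E$ the extra factor is $\bI$ at the level $\lambda_0-\varepsilon$ in case 1 (resp. $\lambda_0+\varepsilon$ in case 2), where $T|_E$ agrees in sign with $L$, and is $\degso(-Id,B(\cH^N_m))^{n_-}$ (resp. $\degso(-Id,B(\cH^N_m))^{-n_+}$) at the other level. Subtracting the two degrees and factoring out the common $\bW'$-contribution gives (1) directly in the $\bV(m-1)$ form, while for (2) one rewrites the difference via $\bV(m)=\bV(m-1)\oplus\cH^N_m$ to present it in the stated $\bV(m)$ form with the positive exponent $n_+$; for (3) the crossing space is a sum of trivial representations $\cH^N_0\cong\bR$, on which $\degso(-Id,B(\cH^N_0))=-\bI$, and the $\star$-product collapses to $\big((-1)^{n_-}-(-1)^{n_+}\big)\cdot\bI$.

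The step I expect to be the main obstacle is the bookkeeping in the third paragraph: getting the sign comparison with $L$ right on every spectral branch and, above all, correctly locating the constant ($m'=0$) eigenspaces after the passage to $\bW$. This is exactly where (a6) enters, and it is what decides the asymmetry between $\bV(m-1)$ in (1) and $\bV(m)$ in (2) as well as the appearance of the inverse exponents $-n_+$. By contrast, both the linearization reduction and the product formula for the linear degree are direct appeals to the established properties of $\degso$.
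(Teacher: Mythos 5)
Your proposal is correct and takes essentially the same route as the paper: linearisation at $\tilde u_0$, reduction to the spectral signs given by Lemma \ref{lem:spectrum}, and the product formula plus cancellation of invertible factors in $U(SO(2))$. Your ``relative product formula'' (collecting only the eigenspaces where $T_\lambda$ and the reference $L$ disagree in sign) is precisely what the paper's explicit computation $\degso(L,\cdot)^{-1}\star\degso(L+L_{\lambda B},\cdot)$ produces after cancelling the common factors, so the two arguments coincide in substance, differing only in bookkeeping (and in the minor point that your $T_{\lambda_0+\varepsilon}$ and $T_{\lambda_0-\varepsilon}$ ``agree'' on $\bW'$ only in spectral signs, hence in degree, not as operators).
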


\begin{proof}
Fix $\beta_m\in \sigma(-\Delta_{S^{N-1}})\setminus\{0\}$ and suppose that $n_->0$. We are going to compute the bifurcation index given by \eqref{eq:so(2)index} for $\lambda_0=\beta_m$.
From the definition and the linearisation property of the degree we have
\begin{multline}\label{eq:degreeinproof}
\nabla_{SO(2)}\text{-}\deg(\nabla_u\Psi(\cdot, \beta_{m}\pm\varepsilon), \Omega \cap \bW)=\\=\nabla_{SO(2)}\text{-}\deg(L, B(\bH^n\cap\bW))^{-1} \star
\nabla_{SO(2)}\text{-}\deg(L+ L_{(\beta_m\pm\varepsilon) B}, B(\bH^n\cap\bW)),
\end{multline}
where $n$ is sufficiently large. Taking into consideration the form of L (see Lemma \ref{lem:postacPhi}(1)) and the product formula for  the degree, we have:
\begin{multline*}
\nabla_{SO(2)}\text{-}\deg(L, B(\bH^n\cap\bW))=\\ =\nabla_{SO(2)}\text{-}\deg(-Id, B(\bV(n)))^{n_+}
\star\nabla_{SO(2)}\text{-}\deg(-Id, B(\bV(n)\ominus \bV(0)))^{n_+^0}.
\end{multline*}

Now we are going to compute the latter factor in \eqref{eq:degreeinproof}.
From Lemma \ref{lem:spectrum} it follows that $(L+L_{(\beta_m+\varepsilon)B})_{|\bH^n\cap\bW}$ is homotopic to an operator acting as $-Id$ on
\begin{multline*} \left(\bigoplus_{i=1}^{n_-}\bigoplus_{\substack{\beta_k<\beta_m+\varepsilon\\k=0,\ldots,n}}\cH^N_k\right)\oplus \left(\bigoplus_{i=1}^{n_+}\bigoplus_{\substack{\beta_k>-\beta_m-\varepsilon\\k=0,\ldots,n}}\cH^N_k\right) \oplus\bigoplus_{i=1}^{n_+^0}\left(\bV(n)\ominus \bV(0)\right)=\\=\bigoplus_{i=1}^{n_-}\bV(m)\oplus \bigoplus_{i=1}^{n_+}\bV(n)\oplus\bigoplus_{i=1}^{n_+^0}\left(\bV(n)\ominus \bV(0)\right)
\end{multline*}
and as $Id$ on the orthogonal complement of this space. Since $\nabla_{SO(2)}\text{-}\deg(Id, B(\bX))=\bI$ for any $SO(2)$-representation $\bX$,  applying the homotopy property and the product formula for the degree, we get
\begin{align*}
\nabla_{SO(2)}\text{-}\deg(L+L_{(\beta_m+\varepsilon) B}, B(\bH^n\cap\bW))
 =\nabla_{SO(2)}\text{-}\deg(-Id, B(\bV(m)))^{n_-}\star  \\ \nabla_{SO(2)}\text{-}\deg(-Id, B(\bV(n))))^{n_+}
 \star \nabla_{SO(2)}\text{-}\deg(-Id, B(\bV(n)\ominus \bV(0)))^{n_+^0}.
\end{align*}
Analogously
\begin{align*}
 \nabla_{SO(2)}\text{-}\deg(L+L_{(\beta_m-\varepsilon) B}, B(\bH^n\cap\bW))=\nabla_{SO(2)}\text{-}\deg(-Id, B(\bV(m-1)))^{n_-}\star \\ \star \nabla_{SO(2)}\text{-}\deg(-Id, B(\bV(n)))^{n_+}\star \nabla_{SO(2)}\text{-}\deg(-Id, B(\bV(n)\ominus \bV(0)))^{n_+^0}.
\end{align*}
Hence
\begin{align*}
\BIF(\beta_{m})=& \nabla_{SO(2)}\text{-}\deg(\nabla_u\Psi(\cdot, \beta_m+\varepsilon),\Omega \cap \bW)- \nabla_{SO(2)}\text{-}\deg(\nabla_u\Psi(\cdot, \beta_m-\varepsilon),\Omega \cap \bW)=\\
=&\nabla_{SO(2)}\text{-}\deg(-Id, B(\bV(n)))^{-n_+}\star \nabla_{SO(2)}\text{-}\deg(-Id, B(\bV(n)\ominus \bV(0)))^{-n_+^0}\star\\
\star & \nabla_{SO(2)}\text{-}\deg(-Id, B(\bV(m-1)))^{n_-}
\star \nabla_{SO(2)}\text{-}\deg(-Id, B(\bV(n)))^{n_+}\star \\
\star & \nabla_{SO(2)}\text{-}\deg(-Id, B(\bV(n)\ominus \bV(0)))^{n_+^0} \star
\left( \nabla_{SO(2)}\text{-}\deg(-Id, B(\cH^N_m)))^{n_-}-\bI \right) =\\
= & \nabla_{SO(2)}\text{-}\deg(-Id, B(\bV(m-1)))^{n_-}\star
\left( \nabla_{SO(2)}\text{-}\deg(-Id, B(\cH^N_m))^{n_-}-\bI \right).
\end{align*}
The proof of the remaining two equalities is analogous.
\end{proof}

Recall that the coordinates of the $SO(2)$-degree and $SO(2)$-bifurcation index can be written as  $(\alpha_{SO(2)}, \alpha_{\bZ_1}, \alpha_{\bZ_2}\ldots)\in U(SO(2))$, see Appendix.
Using the above lemma, we can compute the precise values of the coordinates of the bifurcation indices. To this end we use the descriptions of the spaces $\cH^N_m$ and $\bV(m)$ as $SO(2)$-representations, given in Section \ref{sec:representations}. Consider the numbers $k_j^m$ and $r_j^m$ given in these descriptions.

\begin{Lemma}\label{lem:coordinates}
Fix $\beta_m\in \sigma(-\Delta_{S^{N-1}})\setminus\{0\}$.
\begin{enumerate}
\item If $n_->0$, then for every $l\leq m$
\[\BIF(\beta_{m})_{\bZ_{l}}= (-1)^{ n_-\dim\bV(m)}n_- \big((-1)^{n_-\dim \cH^N_m}r^{m-1}_l-r^{m-1}_l-k^m_l \big)\]
 and for every $l> m$
 \[ \BIF(\beta_{m})_{\bZ_{l}}= 0.\]
 Moreover,
\[
\BIF(\beta_{m})_{SO(2)}=(-1)^{n_-\dim\bV(m)}\left((-1)^{n_-\dim \cH^N_m}-1\right). \]
\item If $n_+>0$, then for every $l\leq m$
 \[\BIF(-\beta_{m})_{\bZ_{l}}= (-1)^{ n_+\dim\bV(m)}n_+ \big((-1)^{n_+\dim \cH^N_m}r^{m-1}_l-r^{m-1}_l-k^m_l \big)\]
 and for every $l> m$
 \[
\BIF(-\beta_{m})_{\bZ_{l}}= 0.
\]
Moreover,
\[ \BIF(-\beta_{m})_{SO(2)}=(-1)^{n_+\dim\bV(m)}\left((-1)^{n_+\dim \cH^N_m}-1\right).
\]
\end{enumerate}
\end{Lemma}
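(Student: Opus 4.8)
The plan is to reduce everything to the arithmetic of the Euler ring $U(SO(2))$, starting from the two product formulas already established in Lemma~\ref{lem:indices}. The one ingredient I need to import is the explicit coordinate description of the basic degree $\nabla_{SO(2)}\text{-}\deg(-Id, B(\bE))$ for a finite-dimensional orthogonal $SO(2)$-representation $\bE$. Writing $\bE$ as the sum of its trivial part and its weight-$l$ isotypical components (as recalled in Section~\ref{sec:representations}), multiplicativity of the degree over direct sums, together with the elementary values on a trivial line and on a weight-$l$ plane, gives that the $SO(2)$-coordinate equals $(-1)^{\dim \bE^{SO(2)}}=(-1)^{\dim \bE}$ (the two agree because the non-trivial part is even-dimensional) and that the $\bZ_l$-coordinate is $\pm(-1)^{\dim\bE}$ times the multiplicity of the weight-$l$ summand. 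For $\bE=\bV(m-1)$ that multiplicity is $r^{m-1}_l$ and for $\bE=\cH^N_m$ it is $k^m_l$.

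Next I would record the two pieces of Euler-ring arithmetic that drive the computation. Since the $SO(2)$-coordinate is multiplicative and all products $[SO(2)/\bZ_k]\star[SO(2)/\bZ_l]$ vanish (each such orbit is a circle, of zero Euler characteristic), any $x=(x_{SO(2)},x_{\bZ_1},x_{\bZ_2},\ldots)$ obeys the power rule $x^n=(x_{SO(2)}^n,\, n\,x_{SO(2)}^{n-1}x_{\bZ_1},\, n\,x_{SO(2)}^{n-1}x_{\bZ_2},\ldots)$ and, when $x_{SO(2)}=\pm1$, the inverse rule $x^{-1}=(x_{SO(2)},-x_{\bZ_1},-x_{\bZ_2},\ldots)$. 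Moreover, a product has $SO(2)$-coordinate equal to the product of the two $SO(2)$-coordinates and $\bZ_l$-coordinate equal to $x_{SO(2)}y_{\bZ_l}+x_{\bZ_l}y_{SO(2)}$.

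With these rules in hand, part (1) follows by substituting the coordinates of $\nabla_{SO(2)}\text{-}\deg(-Id, B(\bV(m-1)))$ and $\nabla_{SO(2)}\text{-}\deg(-Id, B(\cH^N_m))$ into the factorisation of $\BIF(\beta_m)$ from Lemma~\ref{lem:indices}(1), raising each factor to the $n_-$-th power, subtracting $\bI$ in the second factor, and multiplying out. Collecting the $\bZ_l$-coordinate and simplifying the parities through $\dim\bV(m)=\dim\bV(m-1)+\dim\cH^N_m$ yields the stated closed form; the vanishing for $l>m$ is automatic, since $\cH^N_m$ carries no weight exceeding $m$ and $\bV(m-1)$ none reaching $m$, so $k^m_l=r^{m-1}_l=0$ there. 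Part (2) is entirely parallel, the only genuinely new point being the negative exponent $-n_+$ on $\nabla_{SO(2)}\text{-}\deg(-Id, B(\bV(m)))$ appearing in Lemma~\ref{lem:indices}(2), which is disposed of by the inverse rule above (legitimate because the relevant $SO(2)$-coordinate is $\pm1$).

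The step I expect to be most delicate is precisely this sign and parity bookkeeping: the prefactor $(-1)^{n_-\dim\bV(m)}$ (respectively $(-1)^{n_+\dim\bV(m)}$) must be produced consistently in both the $SO(2)$- and the $\bZ_l$-coordinates, even though the prefactor degree involves $\bV(m-1)$ in part (1) but $\bV(m)$ in part (2). Reconciling $(-1)^{n_-\dim\bV(m-1)}$ with $(-1)^{n_-\dim\bV(m)}$ forces repeated use of $\dim\bV(m)-\dim\bV(m-1)=\dim\cH^N_m$ and of the triviality of a doubled factor $(-1)^{n_-\dim\cH^N_m}$; tracking the exact sign attached to each multiplicity $r^{m-1}_l$ and $k^m_l$ through the power and then through the product is where an error is easiest to make, and is the part that most rewards care.
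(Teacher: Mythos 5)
Your proposal is correct and follows essentially the same route as the paper: the authors likewise combine the explicit coordinates of $\nabla_{SO(2)}\text{-}\deg(-Id,B(\cdot))$ from \eqref{eq:stopienId} (via the decompositions \eqref{eq:rozklad} and \eqref{eq:v(m)}) with the factorisations of Lemma~\ref{lem:indices} and the power/product formula \eqref{eq:powersinU(SO(2))} in $U(SO(2))$, and then absorb the parity discrepancy through $\dim\bV(m)=\dim\bV(m-1)+\dim\cH^N_m$ (together with $r^m_l=r^{m-1}_l+k^m_l$), exactly the bookkeeping you flag as the delicate step.
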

\begin{proof}
The assertion follows from the formula for the degree of $-Id$ on a given $SO(2)$-representation and the multiplication formula in $U(SO(2))$. More precisely, from \eqref{eq:rozklad}, \eqref{eq:v(m)}, \eqref{eq:stopienId} in  Appendix, we have, for $l\leq m$,
\begin{align*}
\nabla_{SO(2)}\text{-}\deg_{SO(2)}(-Id, B(\bV(m-1)))&=(-1)^{\dim\bV(m-1)},\\
\nabla_{SO(2)}\text{-}\deg_{\bZ_{l}}(-Id, B(\bV(m-1)))&=(-1)^{\dim\bV(m-1)+1} \cdot r^{m-1}_l,\\
\nabla_{SO(2)}\text{-}\deg_{SO(2)}(-Id, B(\cH^N_m))&=(-1)^{\dim \cH^N_m}, \\
\nabla_{SO(2)}\text{-}\deg_{\bZ_{l}}(-Id, B(\cH^N_m))&=(-1)^{\dim \cH^N_m+1}\cdot k^m_l.
\end{align*}
Therefore, by Lemma \ref{lem:indices} and the formula \eqref{eq:powersinU(SO(2))},
\begin{eqnarray*}
&&\BIF(\beta_{m})_{\bZ_{l}}=(-1)^{(n_--1)\dim\bV(m-1) }n_-
 \big( (-1)^{\dim\bV(m-1)+1}  r^{m-1}_l \cdot (-1)^{n_-\dim \cH^N_m} + \\
 && - (-1)^{\dim\bV(m-1)+1} r^{m-1}_l +
  (-1)^{\dim\bV(m-1)}(-1)^{(n_--1)\dim \cH^N_m} (-1)^{\dim \cH^N_m +1} k^m_l
\big)= \\
&& =(-1)^{(n_--1)\dim\bV(m-1) }n_- (-1)^{\dim\bV(m-1) }
\big((-1)^{n_-\dim \cH^N_m +1}r^{m-1}_l+r^{m-1}_l-(-1)^{n_-\dim \cH^N_m}k^m_l \big) =\\
&&= (-1)^{n_-\dim\bV(m-1) }n_- \big((-1)^{n_-\dim \cH^N_m +1}r^{m-1}_l+r^{m-1}_l-(-1)^{n_-\dim \cH^N_m}k^m_l \big)
=\\
&&= (-1)^{ n_-\dim\bV(m)}n_- \big((-1)^{n_-\dim \cH^N_m}r^{m-1}_l-r^{m-1}_l-k^m_l \big).
\end{eqnarray*}
The proof of the remaining equalities is analogous.
\end{proof}

\begin{Corollary}\label{cor:coordinates}
From the above lemma, since $r^{m-1}_m=0$, $k^m_m=1$, for every $m>0$ it follows that
\begin{equation}\label{eq:mthcoordinate}
\BIF(\beta_{m})_{\bZ_{m}}=n_-(-1)^{n_-\dim\bV(m)+1}, \quad \BIF(-\beta_{m})_{\bZ_{m}}= n_+(-1)^{n_+\dim\bV(m)+1}.
\end{equation}
In particular, for every $m>0$:
\begin{enumerate}[(i)]
\item if $n_{-}>0$, then $\BIF(\beta_{m})\neq\Theta$. Therefore a global bifurcation occurs from the orbit $G(\tilde{u}_0)\times\{\beta_m\}$,
\item if $n_{+}>0$, then $\BIF(-\beta_{m})\neq\Theta$.  Therefore a global bifurcation occurs from the orbit $G(\tilde{u}_0)\times\{-\beta_m\}$.
\end{enumerate}
\end{Corollary}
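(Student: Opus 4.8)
The plan is to read the $m$th coordinate straight off Lemma \ref{lem:coordinates} and then to certify nontriviality in the Euler ring from that single coordinate. First I would specialise the coordinate formula of Lemma \ref{lem:coordinates}(1), which is valid for every $l\leq m$, to the index $l=m$:
\[
\BIF(\beta_{m})_{\bZ_{m}}=(-1)^{n_-\dim\bV(m)}\,n_-\big((-1)^{n_-\dim\cH^N_m}r^{m-1}_m-r^{m-1}_m-k^m_m\big).
\]
The entire content is that the two representation-theoretic coefficients here collapse: from the $SO(2)$-decompositions of $\bV(m-1)$ and $\cH^N_m$ in Section \ref{sec:representations} one has $r^{m-1}_m=0$ (the weight-$m$ isotypical piece does not occur in $\bV(m-1)=\bigoplus_{k=0}^{m-1}\cH^N_k$) and $k^m_m=1$ (the top weight $m$ occurs exactly once in $\cH^N_m$). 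Substituting these values, the bracket reduces to $-1$, which yields $\BIF(\beta_{m})_{\bZ_{m}}=n_-(-1)^{n_-\dim\bV(m)+1}$. The companion identity $\BIF(-\beta_{m})_{\bZ_{m}}=n_+(-1)^{n_+\dim\bV(m)+1}$ follows verbatim from Lemma \ref{lem:coordinates}(2), and together these give \eqref{eq:mthcoordinate}.

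For the two ``in particular'' assertions I would argue as follows. The coordinate just computed equals $\pm n_-$, hence it is a nonzero integer precisely when $n_->0$. Since $\Theta$ is by definition the element of $U(SO(2))$ all of whose coordinates vanish, one nonzero coordinate already forces $\BIF(\beta_{m})\neq\Theta$. Using the one-to-one correspondence between $\BIF_H(\tilde{u}_0,\beta_m)$ and $\BIF(\beta_m)=\BIF_{SO(2)}(\tilde{u}_0,\beta_m)$ recorded around \eqref{eq:so(2)index}, this transfers to $\BIF_H(\tilde{u}_0,\beta_m)\neq\Theta\in U(H)$. Because $\beta_m\in\Lambda$ by Lemma \ref{lem:nec} and the pair $(G,H)$ is admissible, Theorem \ref{thm:AltRab} applies and produces the continuum $\cC(\tilde{u}_0,\beta_m)$, i.e. global bifurcation from $G(\tilde{u}_0)\times\{\beta_m\}$. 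The case $n_+>0$ at the level $-\beta_m$ is identical, using $\BIF(-\beta_{m})_{\bZ_{m}}=\pm n_+$.

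There is essentially no obstacle once Lemma \ref{lem:coordinates} is available: the formula is a one-line substitution, and the bifurcation conclusion rests only on the remark that a nonzero coordinate certifies nontriviality in $U(SO(2))$. The sole points demanding care are the two normalisations $r^{m-1}_m=0$ and $k^m_m=1$, which must be pinned down from the explicit $SO(2)$-decompositions of Section \ref{sec:representations}, and the restriction $m>0$: for $m=0$ one has $\beta_0=0\notin\sigma(-\Delta_{S^{N-1}})\setminus\{0\}$, so the above does not apply and that level is instead governed by $\BIF(0)$ in Lemma \ref{lem:indices}(3).
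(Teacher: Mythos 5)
Your proposal is correct and follows the same route the paper intends: the corollary is a direct specialisation of Lemma \ref{lem:coordinates} at $l=m$ using $r^{m-1}_m=0$ and $k^m_m=1$ (so the bracket equals $-1$), and the nonvanishing of that single $\bZ_m$-coordinate certifies $\BIF(\pm\beta_m)\neq\Theta$, which via the $U(H)$--$U(SO(2))$ identification, admissibility of $(G,H)$, and Theorem \ref{thm:AltRab} yields the global bifurcation. Your added care about the normalisations from Section \ref{sec:representations} and the exclusion of $m=0$ matches the paper's setup exactly.
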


Denote by $\cC(\tilde{u}_0,\lambda_0)$ the connected component of the set
\[
cl\{(u, \lambda) \in \bH \times \bR \colon \nabla_u \Phi(u, \lambda)=0, (u, \lambda) \notin \cT\},
\]
containing $G(\tilde{u}_0)\times\{\lambda_0\}$.

\begin{Theorem}\label{thm:unbound1}
Consider the system \eqref{eq:system} with the potential $F$ and $u_0$ satisfying the assumptions (a1)-(a6) and fix $\beta_{m_0}\in \sigma(-\Delta_{S^{N-1}})\setminus\{0\}$.
\begin{enumerate}
\item If $n_->0$, then the continuum $\cC(\tilde{u}_0,\beta_{m_0})\subset\bH\times\bR$ of weak solutions of the system \eqref{eq:system} is unbounded.
\item If $n_+>0$, then the continuum $\cC(\tilde{u}_0,-\beta_{m_0})\subset\bH\times\bR$ of weak solutions of the system \eqref{eq:system} is unbounded.
\end{enumerate}
\end{Theorem}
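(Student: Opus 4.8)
The plan is to apply the equivariant Rabinowitz alternative (Theorem~\ref{thm:AltRab}) to the single critical orbit (the case $q=1$, $u_1=\tilde{u}_0$) and to exclude its possibility~(2) by inspecting a single, well-chosen coordinate of the associated sum of bifurcation indices in $U(SO(2))$. I would describe case~(1) in detail; case~(2) is entirely symmetric, with the roles of $n_-$ and $\beta_{m_0}$ played by $n_+$ and $-\beta_{m_0}$.

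First I would record that $\Phi$ satisfies the hypotheses of Section~\ref{sec:global} and that the pair $(G,H)$ with $H=\{e\}\times SO(2)$ is admissible, so Theorem~\ref{thm:AltRab} applies. Since $n_->0$ and $m_0\geq 1$ (because $\beta_{m_0}\neq 0$), Corollary~\ref{cor:coordinates}(i) gives $\BIF(\beta_{m_0})\neq\Theta$, equivalently $\BIF_H(\tilde{u}_0,\beta_{m_0})\neq\Theta$ under the canonical identification $U(H)\cong U(SO(2))$. Hence the continuum $\cC(\tilde{u}_0,\beta_{m_0})$ exists and, by Theorem~\ref{thm:AltRab}, it is either unbounded, in which case we are done, or it satisfies possibility~(2).

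Next I would assume, for contradiction, that possibility~(2) holds. With $q=1$ the alternative yields a finite set of levels $\{\lambda_{1,1},\ldots,\lambda_{1,r_1}\}\subset\Lambda$, containing $\beta_{m_0}$, with $\sum_{j=1}^{r_1}\BIF(\lambda_{1,j})=\Theta$. By Lemma~\ref{lem:nec} each such level is of the form $\beta_m$ or $-\beta_m$; let $M$ be the largest index $m$ occurring among them, so that $M\geq m_0\geq 1$. The key step is to read off the $\bZ_M$-coordinate of the identity $\sum_j\BIF(\lambda_{1,j})=\Theta$. By Lemma~\ref{lem:coordinates}, every return level of index strictly smaller than $M$ contributes $0$ to the $\bZ_M$-coordinate, and the level $\beta_0=0$, if present, contributes a multiple of $\bI$ whose $\bZ_M$-coordinate vanishes for $M\geq 1$. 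Thus only $\beta_M$ and $-\beta_M$ (those actually occurring) contribute, and by \eqref{eq:mthcoordinate} their contributions are $n_-(-1)^{n_-\dim\bV(M)+1}$ and $n_+(-1)^{n_+\dim\bV(M)+1}$, respectively.

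The main obstacle is the mixed case $n_-n_+>0$, where both $\beta_M$ and $-\beta_M$ may be return levels and one must rule out cancellation. I would resolve this by a parity argument. Writing $d=\dim\bV(M)$ and letting $\varepsilon_\pm\in\{0,1\}$ record whether $\pm\beta_M$ occurs, the $\bZ_M$-coordinate of the sum equals $-\big(\varepsilon_+ n_-(-1)^{n_-d}+\varepsilon_- n_+(-1)^{n_+d}\big)$. Note that whenever $\varepsilon_-=1$ the level $-\beta_M$ lies in $\Lambda$, which by Lemma~\ref{lem:nec} forces $n_+>0$; together with $n_->0$ from case~(1), both coefficients in play are positive. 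If exactly one of $\varepsilon_\pm$ is $1$ the coordinate is a nonzero integer; if both equal $1$, vanishing would force $n_-=n_+$ (comparing absolute values) and simultaneously $(-1)^{n_-d}=-(-1)^{n_-d}$, which is impossible. Since $M$ is attained by some return level, at least one $\varepsilon$ equals $1$, so the $\bZ_M$-coordinate is nonzero, contradicting $\sum_j\BIF(\lambda_{1,j})=\Theta$, whose coordinates all vanish. Therefore possibility~(2) cannot occur and $\cC(\tilde{u}_0,\beta_{m_0})$ is unbounded. Case~(2) follows by the same argument applied to the index $M$ of the return levels, using $n_+>0$ and Corollary~\ref{cor:coordinates}(ii).
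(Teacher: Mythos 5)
Your proposal is correct and follows essentially the same route as the paper's own proof: apply Theorem \ref{thm:AltRab} to the single orbit, assume the continuum is bounded, and derive a contradiction by inspecting the $\bZ_{M}$-coordinate (the paper's $\bZ_{\mu}$) of the vanishing sum of bifurcation indices at the maximal return index, using Lemma \ref{lem:coordinates} and \eqref{eq:mthcoordinate}. Your absolute-value argument in the mixed case ($n_-=n_+$ forced, hence equal signs, hence nonzero sum) is a slightly streamlined version of the paper's parity case analysis, but it is the same underlying idea.
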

\begin{proof}
We prove only (1), the proof of (2) is analogous.

Let $n_->0$. From Corollary \ref{cor:coordinates} there exists a continuum  $\mathcal{C}(\tilde{u}_0, \beta_{m_0})$ bifurcating from $G(\tilde{u}_0) \times \{\beta_{m_0}\}$. We will prove that this continuum is unbounded.

Suppose, by contrary, that $\mathcal{C}(\tilde{u}_0, \beta_{m_0})$ is bounded. Then, by Theorem \ref{thm:AltRab},
there exist $\lambda_1,\ldots,\lambda_r\in\Lambda$ such that $\cC(\tilde{u}_0, \lambda_0) \cap \cT=\bigcup_{j=1}^{r} G(\tilde{u}_0) \times \{\lambda_{j}\}$ and
\begin{equation}\label{eq:sumofindices}
\sum_{j=1}^{r} \BIF(\lambda_{j})= \Theta\in U(SO(2)).
\end{equation}

To prove that the continuum is unbounded, we will show that the equality \eqref{eq:sumofindices} cannot be satisfied.
More precisely, it cannot be satisfied at the $\bZ_{\mu}$-th coordinate, where $\mu$ is such that  $\beta_{\mu}=\max\{|\lambda_{1}|,\ldots,|\lambda_{r}|\}$.
From Lemma \ref{lem:coordinates} it follows that for every $|\lambda_j|<\beta_{\mu}$, $j=1,\ldots,r$, $\BIF(\lambda_j)_{\bZ_{\mu}}=0$. Therefore, the only nonzero summands in $\sum_{j=1}^{r} \BIF(\lambda_{j})_{\bZ_{\mu}}$ can be the ones corresponding to $|\lambda_j|=\beta_{\mu}$, which can be written as
\begin{equation*}
\sum_{j=1}^{r} \BIF(\lambda_{j})_{\bZ_{\mu}}= \alpha_{\mu}\BIF(\beta_{\mu})_{\bZ_{\mu}}+\alpha_{-\mu}\BIF(-\beta_{\mu})_{\bZ_{\mu}},
\end{equation*}
where $\alpha_{\pm \mu}\in\{0,1\}$.
From  \eqref{eq:mthcoordinate} and \eqref{eq:sumofindices} we get
\begin{equation}\label{eq:niewiemjak}
\alpha_{\mu}n_-(-1)^{n_-\dim\bV({\mu})+1}+\alpha_{-{\mu}}n_+(-1)^{n_+\dim\bV({\mu})+1}=0.
\end{equation}
We will prove that this equality cannot be satisfied for any values $\alpha_{\pm\mu}$ and $n_{\pm}$.
If $n_+=0$, then, by Lemma \ref{lem:nec}, $\alpha_{-\mu}=0$ and  \eqref{eq:niewiemjak} is equivalent to $\alpha_{\mu}n_-=0$. From the definition of $\mu$, in this case $\alpha_{\mu}\neq0$ and hence we obtain $n_-=0$, which contradicts $n_->0$.

Therefore $n_{+}>0$ and the equation \eqref{eq:niewiemjak} can be satisfied only if $\alpha_{\pm\mu}>0$, i.e. $\alpha_{\mu}=\alpha_{-\mu}=1$.
If $n_-$ and $n_+$ are of the same parity, then $\alpha_{\mu}n_-+\alpha_{-{\mu}}n_+>0$, which contradicts \eqref{eq:niewiemjak}.
If $n_-$ and $n_+$ are of different parity, then from \eqref{eq:niewiemjak} either $\alpha_{\mu}n_-+\alpha_{-{\mu}}n_+=0$ or $\alpha_{\mu}n_--\alpha_{-{\mu}}n_+=0$. Reasoning similarly as before we obtain a contradiction. This finishes the proof.
\end{proof}

In the previous theorem we have considered all possible levels of bifurcation, except $\lambda_0=0$. Now we are going to study the global bifurcation from the orbit $G(\tilde{u}_0) \times \{0\}$.

\begin{Theorem}\label{thm:unboundzero}
Consider the system \eqref {eq:system} with the potential $F$ and $u_0$ satisfying the assumptions (a1)-(a6). If $n_-, n_+$ are of different parity, then the continuum $\cC(\tilde{u}_0,0)\subset\bH\times\bR$ of weak solutions of the system \eqref{eq:system} is unbounded.
\end{Theorem}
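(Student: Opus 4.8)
The plan is to follow the proof of Theorem \ref{thm:unbound1} almost verbatim; the only genuinely new points are that the bifurcation now emanates from the level $\lambda_0=0$, so I must first guarantee that it takes place at all, and then check that this particular level does not interfere with the coordinate on which the final contradiction rests.

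First I would secure the bifurcation. Since $n_-$ and $n_+$ have different parity, $(-1)^{n_-}-(-1)^{n_+}=\pm 2$, so Lemma \ref{lem:indices}(3) yields $\BIF(0)=\pm 2\cdot\bI\neq\Theta\in U(SO(2))$. As $0\in\Lambda$ in every case of Lemma \ref{lem:nec} (because $\beta_0=0$), the admissibility of $(G,H)$ together with Theorem \ref{thm:AltRab} provides the continuum $\cC(\tilde{u}_0,0)$ bifurcating from $G(\tilde{u}_0)\times\{0\}$.

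Next, arguing by contradiction, I would assume $\cC(\tilde{u}_0,0)$ bounded. Then alternative (2) of Theorem \ref{thm:AltRab} holds: there are $\lambda_1,\dots,\lambda_r\in\Lambda$ with $\cC(\tilde{u}_0,0)\cap\cT=\bigcup_{j=1}^{r}G(\tilde{u}_0)\times\{\lambda_j\}$ and $\sum_{j=1}^{r}\BIF(\lambda_j)=\Theta$. Because alternative (2) is in force, the continuum meets $\cT$ at a level distinct from $0$, so $\beta_\mu:=\max\{|\lambda_1|,\dots,|\lambda_r|\}$ satisfies $\beta_\mu>0$, i.e. $\mu>0$. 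I would then inspect the $\bZ_\mu$-coordinate of the identity $\sum_j\BIF(\lambda_j)=\Theta$. By Lemma \ref{lem:coordinates}, $\BIF(\lambda_j)_{\bZ_\mu}=0$ for every $|\lambda_j|<\beta_\mu$; the decisive observation is that $\BIF(0)_{\bZ_\mu}=0$ as well, since $\BIF(0)$ is a scalar multiple of $\bI$ and hence has vanishing $\bZ_\mu$-coordinate for $\mu>0$. Thus only the levels $\pm\beta_\mu$ contribute, and with \eqref{eq:mthcoordinate} the identity reduces to precisely the relation \eqref{eq:niewiemjak},
\[
\alpha_\mu n_-(-1)^{n_-\dim\bV(\mu)+1}+\alpha_{-\mu}n_+(-1)^{n_+\dim\bV(\mu)+1}=0,\qquad \alpha_{\pm\mu}\in\{0,1\},
\]
in which at least one coefficient $\alpha_{\pm\mu}$ equals $1$.

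The crux, and the only place where the parity hypothesis enters, is to show this relation is impossible. If one of $n_\pm$ vanishes, Lemma \ref{lem:nec} forces the corresponding coefficient $\alpha$ to be $0$ (the relevant half of $\Lambda$ being empty), whence the surviving term gives $n_\mp=0$, contradicting $n_-+n_+>0$. If both $n_\pm$ are positive, a single nonzero coefficient again forces $n_\pm=0$, so necessarily $\alpha_\mu=\alpha_{-\mu}=1$ and the relation becomes $n_-(-1)^{n_-\dim\bV(\mu)}+n_+(-1)^{n_+\dim\bV(\mu)}=0$. Since $n_-$ and $n_+$ have different parity, the two signs here agree exactly when $\dim\bV(\mu)$ is even, forcing $n_-+n_+=0$, and disagree exactly when $\dim\bV(\mu)$ is odd, forcing $n_-=n_+$; both are absurd. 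Hence \eqref{eq:sumofindices} cannot hold, the assumption of boundedness fails, and $\cC(\tilde{u}_0,0)$ is unbounded. The essential point is that replacing the nonzero bifurcation level of Theorem \ref{thm:unbound1} by $\lambda_0=0$ costs nothing in the coordinate argument, while the parity condition is exactly what both triggers the bifurcation and rules out the degenerate balance $n_-=n_+$.
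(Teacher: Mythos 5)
Your proof is correct and complete, but it follows a genuinely different route from the paper's. The paper disposes of the theorem in two lines by reduction to Theorem \ref{thm:unbound1}: if $\cC(\tilde{u}_0,0)$ were bounded, Theorem \ref{thm:AltRab} would force it to contain $G(\tilde{u}_0)\times\{\lambda_{m_0}\}$ for some $\lambda_{m_0}\in\Lambda\setminus\{0\}$, and since $\cC(\tilde{u}_0,0)$ is then a connected subset of $cl(\cN)$ passing through that orbit, it coincides with the continuum $\cC(\tilde{u}_0,\lambda_{m_0})$, which Theorem \ref{thm:unbound1} has already shown to be unbounded --- a contradiction. You instead inline the whole coordinate argument from the proof of Theorem \ref{thm:unbound1}: you apply the sum formula of Theorem \ref{thm:AltRab} directly to the bifurcation at $0$ and derive the contradiction at the $\bZ_\mu$-coordinate. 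The one genuinely new ingredient this requires --- and which you correctly supply --- is that $\BIF(0)=\left((-1)^{n_-}-(-1)^{n_+}\right)\cdot\bI$ is an integer multiple of the unit of $U(SO(2))$, hence has vanishing $\bZ_l$-coordinates for all $l\geq 1$, so the unavoidable presence of the level $0$ among the intersection levels does not disturb the relation \eqref{eq:niewiemjak}; your subsequent case analysis (one of $n_{\pm}$ zero, excluded via Lemma \ref{lem:nec}; both positive, split according to the parity of $\dim\bV(\mu)$) is exactly right. The trade-off: the paper's reduction is shorter and never needs to examine $\BIF(0)$ beyond its nonvanishing, while your version is self-contained, makes explicit why the level $\lambda_0=0$ is harmless in the coordinate computation, and is closer in spirit to the argument the paper itself deploys later for the two-orbit analogue (Theorem \ref{thm:unbounded0}), where a bare reduction of this kind no longer suffices.
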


\begin{proof}
From Lemma \ref{lem:indices}(3) it follows that $\BIF(0)\neq\Theta\in U(SO(2))$, therefore, by Theorem \ref{thm:rabinowitz0}, there exists a continuum  $\mathcal{C}(\tilde{u}_0, 0)$ bifurcating from $G(\tilde{u}_0) \times \{0\}$. If the continuum $\mathcal{C}(\tilde{u}_0, 0)$ is bounded, then, by Theorem \ref{thm:AltRab}, it contains $G(\tilde{u}_0)\times\{\lambda_{m_0}\}$ for some $\lambda_{m_0}\in \Lambda\setminus\{0\}$. On the other hand, from the above theorem it follows that the continuum bifurcating from $G(\tilde{u}_0)\times\{\lambda_{m_0}\}$ is unbounded and hence so must be $\mathcal{C}(\tilde{u}_0, 0)$. This completes the proof.
\end{proof}

\begin{Remark}
The special case of the above theorems is the situation when $\Gamma=\{e\}$, i.e. the critical orbit of the potential $F$ consists of one element. This case has been considered in \cite{RybSte}. The above theorems generalise the result from this paper.
\end{Remark}

\section{Elliptic system with two critical orbits}

In the previous section we have considered the elliptic system \eqref{eq:system} with the potential $F$ having one critical orbit $\Gamma(u_0)$. This implied the existence of the trivial family having one connected component. Now we turn our attention to the situation with more than one critical orbit of $F$, i.e. with a trivial family with more than one component. The Rabinowitz alternative states that a bifurcating continuum can be unbounded or it can return to the trivial family, connecting different orbits. Thus, if we are able to exclude the latter possibility, we can prove unboundedness of such continua.

For simplicity of computations we describe only the case of two critical orbits.
More precisely, we consider as before a compact and connected Lie group $\Gamma$ and we study the system \eqref{eq:system} with the assumptions (a1), (a3)-(a5), replacing (a2) by
\begin{enumerate}
\item[(a2')]  $u_1, u_2$ are critical points of $F$ for all $\lambda \in \bR$, where $\Gamma(u_1)\cap\Gamma(u_2)=\emptyset$ and $\nabla^2 F(u_1,\lambda)= \nabla^2 F(u_2,\lambda) = \lambda B$, where $B=\diag(b_1,\ldots,b_p)$, $b_i\in\{0,1\}$.
\end{enumerate}
From the above assumption and (a3) it follows that $\Gamma(u_1)\cup\Gamma(u_2)\subset(\nabla_u F(\cdot,\lambda))^{-1}(0)$ for every $\lambda\in\bR$. We assume that:
\begin{enumerate}
\item[(a6')] the orbits $\Gamma(u_1)$, $\Gamma(u_2)$ are non-degenerate for every $\lambda\in\bR$.
\end{enumerate}

As before, we consider weak solutions of the system as critical points of the functional $\Phi$ defined by \eqref{Phi}. Since the problem is invariant, from our assumptions it follows that $G(\tilde{u}_1)\cup G(\tilde{u}_2)\subset(\nabla_u \Phi(\cdot,\lambda))^{-1}(0)$ for every $\lambda\in\bR$, where $\tilde{u}_1$ and $\tilde{u}_2$ are the constant functions defined by $\tilde{u}_1\equiv u_1$, $\tilde{u}_2\equiv u_2$. Hence we have a family of weak solutions of the system \eqref{eq:system}, i.e. the family $\cT=(G(\tilde{u}_1)\cup G(\tilde{u}_2))\times\bR$. We call the elements of $\cT$ the trivial solutions of the problem.

We are going to study the global bifurcation phenomenon from $\cT$.
As before, we obtain that this phenomenon can occur only at $G(\tilde{u}_i)\times\{\lambda_0\}$, where $i=1,2$, $\lambda_0\in \Lambda$ and $\Lambda$ is given in Lemma \ref{lem:nec}.
 To study the bifurcation problem we use the bifurcation indices defined by \eqref{eq:so(2)index} for $u_0 \in \{u_1,u_2\}, \lambda_0 \in \Lambda$. Repeating the reasoning of the proof of Lemma \ref{lem:indices} and taking into consideration the assumption (a2'), we obtain that  $\BIF_{SO(2)}(\tilde{u}_1,\lambda_0)=\BIF_{SO(2)}(\tilde{u}_2,\lambda_0)$ for every $\lambda_0 \in \Lambda$. To shorten the notation we denote these indices by $\BIF(\lambda_0)$. Note that Lemmas \ref{lem:indices} and \ref{lem:coordinates}  give formulae for $\BIF(\lambda_0)$.

Our aim is to study  the sets of solutions of the system. Reasoning as in the previous section, we first describe levels at which the global bifurcation occurs.

\begin{Lemma}\label{lem:existenceofacontinuum}
Fix $\beta_m\in \sigma(-\Delta_{S^{N-1}})\setminus\{0\}$.
\begin{enumerate}
\item If $n_->0$, then the global bifurcation phenomenon occurs at $G(\tilde{u}_i)\times\{\beta_m\}$, $i=1,2$.
\item If $n_+>0$, then the global bifurcation phenomenon occurs at $G(\tilde{u}_i)\times\{-\beta_m\}$, $i=1,2$.
\item  If $n_-, n_+$ are of different parity, then the global bifurcation phenomenon occurs at $G(\tilde{u}_i)\times\{0\}$, $i=1,2$.
\end{enumerate}
\end{Lemma}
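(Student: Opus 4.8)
The plan is to reduce the statement to a direct application of Theorem \ref{thm:AltRab}, exploiting the observation made just before the lemma that the bifurcation indices at the two orbits coincide and are given by the formulae of Lemmas \ref{lem:indices} and \ref{lem:coordinates}. Recall that, by the definition following Theorem \ref{thm:AltRab}, the global bifurcation phenomenon occurs from $G(\tilde{u}_i)\times\{\lambda_0\}$ as soon as the hypotheses of that theorem are met, the decisive one being $\BIF_H(u_i,\lambda_0)\neq\Theta$. So the whole task is to verify the structural hypotheses and then exhibit a single nonzero coordinate of the relevant index.

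First I would check the structural hypotheses of Theorem \ref{thm:AltRab}. By (a5) the two critical points have trivial isotropy, $\Gamma_{u_1}=\Gamma_{u_2}=\{e\}$, so $G_{\tilde{u}_1}=G_{\tilde{u}_2}=\{e\}\times SO(2)=:H$; in particular $(G_{u_1})=(G_{u_2})$, and the pair $(G,H)$ is admissible by Lemma 2.1 of \cite{PRS}, exactly as in Section \ref{sec:global}. Next, using the common value $\BIF(\lambda_0)=\BIF_{SO(2)}(\tilde{u}_1,\lambda_0)=\BIF_{SO(2)}(\tilde{u}_2,\lambda_0)$ together with its one-to-one correspondence with $\BIF_H(u_i,\lambda_0)$, I would establish non-vanishing in each of the three cases. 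For (1), with $n_->0$ and $\lambda_0=\beta_m$, formula \eqref{eq:mthcoordinate} gives $\BIF(\beta_m)_{\bZ_m}=n_-(-1)^{n_-\dim\bV(m)+1}\neq 0$, so $\BIF(\beta_m)\neq\Theta$. For (2), with $n_+>0$ and $\lambda_0=-\beta_m$, analogously $\BIF(-\beta_m)_{\bZ_m}=n_+(-1)^{n_+\dim\bV(m)+1}\neq 0$. For (3), with $n_-,n_+$ of different parity and $\lambda_0=0$, Lemma \ref{lem:indices}(3) yields $\BIF(0)=\big((-1)^{n_-}-(-1)^{n_+}\big)\cdot\bI$, whose integer coefficient equals $\pm 2\neq 0$; hence $\BIF(0)\neq\Theta$. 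In every case this gives $\BIF_H(u_i,\lambda_0)\neq\Theta$ for both $i=1,2$.

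With all hypotheses verified, Theorem \ref{thm:AltRab} produces, for each $i=1,2$, a continuum $\cC(\tilde{u}_i,\lambda_0)$ bifurcating from $G(\tilde{u}_i)\times\{\lambda_0\}$ and satisfying the alternative, which is precisely the assertion that the global bifurcation phenomenon occurs there. Since the index computation and the verification of hypotheses are performed simultaneously for $u_i\in\{u_1,u_2\}$ and the indices agree, both orbits are covered at once. The main point requiring care — rather than a genuine obstacle — is that the substantive work is already done: the index formulae come from Lemmas \ref{lem:indices} and \ref{lem:coordinates}, and their coincidence at the two orbits rests on (a2'), which forces the same Hessian $\lambda B$ at $u_1$ and $u_2$. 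What remains is only to confirm that the two-orbit family satisfies the conjugacy and admissibility requirements of Theorem \ref{thm:AltRab}, and to note that the non-vanishing of a single coordinate ($\bZ_m$, or the $\bI$-coordinate when $\lambda_0=0$) already certifies $\BIF(\lambda_0)\neq\Theta$.
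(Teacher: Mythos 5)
Your proposal is correct and follows essentially the same route as the paper: both deduce the result by checking, via the computed index formulae (Lemma \ref{lem:coordinates}, Corollary \ref{cor:coordinates}, and Lemma \ref{lem:indices}(3) for the zero level), that the relevant bifurcation index is nonzero, and then invoking Theorem \ref{thm:AltRab}. The only difference is that you spell out the structural hypotheses (conjugate isotropy groups from (a5), admissibility of $(G,H)$, and the coincidence of the indices at the two orbits via (a2')), which the paper leaves implicit.
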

\begin{proof}
Taking into consideration the formulae for bifurcation indices given by Lemma \ref{lem:coordinates} we obtain that in each case $\BIF(\beta_{m})\neq\Theta\in U(SO(2))$. Therefore the assertion follows from Theorem \ref{thm:AltRab}.
\end{proof}

\begin{Remark} Summing up, the  assertions (1) and (2) of the above lemma imply that for all $\lambda_0 \in \Lambda \setminus \{0\}$ the global bifurcation phenomenon occurs at $G(\tilde{u}_i) \times \{ \lambda_0\}.$ Therefore, for $\lambda \neq 0$, the necessary condition of global bifurcation is also a sufficient one.
\end{Remark}

In the rest of this section we study the structure of continua bifurcating from the family $\cT$. Denote by $\cC(\tilde{u}_i, \lambda_0)$ the continuum bifurcating from $G(\tilde{u}_i) \times \{\lambda_0\}$ for $\lambda_0 \in \Lambda$. From the above considerations it follows that we can study only continua of this form. % $\cC(\tilde{u}_i, 0)$ or $\cC(\tilde{u}_i, \lambda_0)$, for $\lambda_0 \in \Lambda \setminus \{0\}$.
We consider two possibilities of the Rabinowitz alternative for these continua and study under which conditions they can occur.
More precisely, Theorem \ref{thm:AltRab} implies that $\cC(\tilde{u}_i, \lambda_0)$ either is unbounded or it intersects $\cT$ at a level $\lambda \neq \lambda_0$, (by such an intersection we understand that $\cC(\tilde{u}_i, \lambda_0) \cap (G(\tilde{u}_1)\times\{\lambda\})\neq \emptyset$ or $\cC(\tilde{u}_i, \lambda_0)\cap (G(\tilde{u}_2)\times\{\lambda\})\neq \emptyset$). In the case of bounded continua, we are interested in describing levels at which such intersections can occur. Roughly speaking, in this way we obtain continua connecting orbits at different levels.

Fix $\mathcal{C}(\tilde{u}_i, \lambda_0)$, $\lambda_0\in\Lambda\setminus \{0\}$.  For this continuum  we denote by $\alpha_{\pm l} \in \{0,1,2\}$ the number of intersections of $\mathcal{C}(\tilde{u}_i, \lambda_0)$ with $\cT$ at the level $\pm\beta_l$, i.e. the number of $G$-orbits in $\mathcal{C}(\tilde{u}_i, \lambda_0) \cap((G(\tilde{u}_1)\times\{\pm\beta_l\})\cup (G(\tilde{u}_2)\times\{\pm\beta_l\})).$
If $\mathcal{C}(\tilde{u}_i, \lambda_0)$ is bounded, we put
\begin{equation}\label{eq:mu}
\mu=\max\{l:\alpha_l+\alpha_{-l} >0\}.
\end{equation}
In other words, $\beta_{\mu}$ is the maximal or $-\beta_{\mu}$ is the minimal  level at $\cT$ met by the continuum, i.e. if $\cT$ is intersected by $\mathcal{C}(\tilde{u}_i, \lambda_0) $ at a level $\lambda_1$ then $|\lambda_1|\leq \beta_{\mu}$.

\begin{Theorem}\label{thm:unbounded1}
 Consider the system \eqref {eq:system} with the potential $F$ and $u_1, u_2$ satisfying the assumptions (a1), (a2'), (a3)-(a5) and (a6') and fix $\lambda_0\in\Lambda\setminus\{0\}$.
 If one of the following conditions is satisfied:
\begin{itemize}
\item[1)] $n_+,n_-$ are of the same parity,
\item[2)] $n_-\neq 2 n_+$ and $n_+\neq 2 n_-,$
\end{itemize}
then, for $i=1,2$, the continuum $\mathcal{C}(\tilde{u}_i, \lambda_0)$ is unbounded.
\end{Theorem}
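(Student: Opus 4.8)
The plan is to argue by contradiction, following the strategy of the proof of Theorem \ref{thm:unbound1} but accounting for the fact that now the bifurcating continuum may return to \emph{two} trivial orbits at each level. First I would fix $i\in\{1,2\}$ and write $\lambda_0=\pm\beta_{m_0}$ with $m_0>0$; by Lemma \ref{lem:existenceofacontinuum} the global bifurcation occurs, so $\cC(\tilde{u}_i,\lambda_0)$ exists. Assume it is bounded. Then Theorem \ref{thm:AltRab}, applied to $\cT=(G(\tilde{u}_1)\cup G(\tilde{u}_2))\times\bR$ together with the equality $\BIF_{SO(2)}(\tilde{u}_1,\cdot)=\BIF_{SO(2)}(\tilde{u}_2,\cdot)=\BIF(\cdot)$, gives that the sum of bifurcation indices over $\cC(\tilde{u}_i,\lambda_0)\cap\cT$ is trivial. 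Grouping the intersections by level and using the counts $\alpha_{\pm l}\in\{0,1,2\}$ introduced before \eqref{eq:mu}, this reads
\begin{equation*}
\sum_{l}\big(\alpha_l\,\BIF(\beta_l)+\alpha_{-l}\,\BIF(-\beta_l)\big)=\Theta\in U(SO(2)).
\end{equation*}
Since $\cC(\tilde{u}_i,\lambda_0)$ contains $G(\tilde{u}_i)\times\{\lambda_0\}$ with $|\lambda_0|=\beta_{m_0}$, the index $\mu$ defined in \eqref{eq:mu} satisfies $\mu\geq m_0>0$.

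Next I would pass to the $\bZ_{\mu}$-th coordinate. By Lemma \ref{lem:coordinates} the coordinate $\BIF(\pm\beta_l)_{\bZ_{\mu}}$ vanishes for every $l<\mu$, so only the extremal level $\pm\beta_{\mu}$ contributes, and by \eqref{eq:mthcoordinate} the identity reduces to
\begin{equation}\label{eq:keycoord}
\alpha_{\mu}\,n_-(-1)^{n_-\dim\bV(\mu)+1}+\alpha_{-\mu}\,n_+(-1)^{n_+\dim\bV(\mu)+1}=0.
\end{equation}
Writing $\epsilon_{\pm}=(-1)^{n_{\pm}\dim\bV(\mu)+1}\in\{-1,1\}$, I note that $\epsilon_-/\epsilon_+=(-1)^{(n_--n_+)\dim\bV(\mu)}$; hence $\epsilon_-=\epsilon_+$ whenever $n_-,n_+$ have the same parity, while $\epsilon_-=-\epsilon_+$ forces $n_-,n_+$ to have different parity.

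The core of the argument is then a short case analysis of \eqref{eq:keycoord}. If $\epsilon_-=\epsilon_+$, then \eqref{eq:keycoord} forces $\alpha_{\mu}n_-+\alpha_{-\mu}n_+=0$, hence $\alpha_{\mu}n_-=\alpha_{-\mu}n_+=0$; combining this with the maximality $\alpha_{\mu}+\alpha_{-\mu}>0$ and, when one of $n_{\pm}$ vanishes, with Lemma \ref{lem:nec} (so that $\Lambda$ contains no levels of the corresponding sign and the associated $\alpha$ is forced to $0$), yields a contradiction. This branch is condition-independent and in particular disposes of condition~1), since same parity always gives $\epsilon_-=\epsilon_+$. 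If instead $\epsilon_-=-\epsilon_+$ — which requires $n_-,n_+$ of different parity, so only condition~2) is available — then \eqref{eq:keycoord} becomes $\alpha_{\mu}n_-=\alpha_{-\mu}n_+$. Lemma \ref{lem:nec} again rules out $n_-=0$ or $n_+=0$, so both are positive and consequently $\alpha_{\mu},\alpha_{-\mu}\in\{1,2\}$. Running through the four possibilities for $(\alpha_{\mu},\alpha_{-\mu})$, the diagonal ones give $n_-=n_+$ (same parity, contradicting the different-parity hypothesis of this branch), while the off-diagonal ones give exactly $n_-=2n_+$ or $n_+=2n_-$, both excluded by condition~2). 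In every case \eqref{eq:keycoord} is contradicted, so $\cC(\tilde{u}_i,\lambda_0)$ must be unbounded.

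I expect the main obstacle to be the bookkeeping of the sign factors $\epsilon_{\pm}$ together with the enlarged range $\{0,1,2\}$ of the intersection counts: it is precisely the possibility $\alpha_{\pm\mu}=2$, corresponding to the continuum returning to \emph{both} orbits at the extremal level, that produces the exceptional relations $n_-=2n_+$ and $n_+=2n_-$. Verifying that conditions~1) and~2) together exhaust all ways of satisfying \eqref{eq:keycoord} — in particular checking that the sign branch $\epsilon_-=-\epsilon_+$ can occur only under different parity, so that condition~2) is exactly what is needed there — is the delicate point of the proof.
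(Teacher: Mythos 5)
Your proof is correct and follows essentially the same route as the paper's: assume the continuum is bounded, invoke Theorem \ref{thm:AltRab} to get a vanishing sum of bifurcation indices, restrict to the $\bZ_{\mu}$-coordinate at the extremal level $\mu$ so that by Lemma \ref{lem:coordinates} and Corollary \ref{cor:coordinates} only $\alpha_{\mu}\BIF(\beta_{\mu})_{\bZ_{\mu}}+\alpha_{-\mu}\BIF(-\beta_{\mu})_{\bZ_{\mu}}$ survives, and then run the case analysis on $n_{\pm}$ and $(\alpha_{\mu},\alpha_{-\mu})\in\{0,1,2\}^2$ to exclude every solution under conditions 1) or 2). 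Your explicit $\epsilon_{\pm}$ sign bookkeeping is in fact marginally more careful than the paper's, which passes from ``different parity'' directly to $\alpha_{\mu}n_{-}=\alpha_{-\mu}n_{+}$ without remarking that this sign configuration additionally requires $\dim\bV(\mu)$ odd (the even case being impossible anyway), but this is a presentational difference, not a different argument.
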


\begin{proof}
Fix $\lambda_0\neq 0$ and suppose that $\mathcal{C}(\tilde{u}_i, \lambda_0)$ is bounded. Then, by Theorem \ref{thm:AltRab}, there exist $\lambda_{1,1},\ldots,\lambda_{1,r_1},\lambda_{2,1},\ldots,\lambda_{2,r_2}\in\Lambda$ such that
$$\mathcal{C}(\tilde{u}_i, \lambda_0)\cap\cT=G(\tilde{u}_1)\times\{\lambda_{1,1},\ldots,\lambda_{1,r_1}\}\cup G(\tilde{u}_2)\times\{\lambda_{2,1},\ldots,\lambda_{2,r_2}\}$$
and
\[
\textbf{}\sum_{i=1}^{2} \sum_{j=1}^{r_i} \BIF_{SO(2)}(\tilde{u}_i, \lambda_{i,j})=\Theta\in U(SO(2)).
\]
Taking into consideration the form of elements of $U(SO(2))$, from the above equality and Lemma \ref{lem:coordinates}, we obtain
\begin{eqnarray*}
0=\sum_{i=1}^{2} \sum_{j=1}^{r_i} \BIF_{SO(2)}(\tilde{u}_i, \lambda_{i,j})_{\bZ_{\mu}}&= &\alpha_{\mu}\BIF(\beta_{\mu})_{\bZ_{\mu}}+\alpha_{-\mu}\BIF(-\beta_{\mu})_{\bZ_{\mu}}=\\
& =&\alpha_{\mu}n_-(-1)^{n_-\dim\bV(\mu)+1}+\alpha_{-\mu}n_+(-1)^{n_+\dim\bV(\mu)+1},
\end{eqnarray*}
where $\mu$ is given by \eqref{eq:mu}.
If $n_-=0$, then, by Lemma \ref{lem:nec}, % and \ref{lem:existenceofacontinuum},
$\alpha_{\mu}=0$ and therefore, from the above equality, $\alpha_{-\mu}n_+=0$. From the definition of $\mu$, in this case $\alpha_{-\mu}\neq0$ and therefore $n_+=0$, which is impossible and hence the continuum is unbounded. In the same way we prove the unboundedness of the continuum for $n_+=0$.

Suppose now that $n_+,n_-\neq 0$.
If $n_+,n_-$ are of the same parity, then $\alpha_{\mu}n_-+\alpha_{-\mu}n_+=0$, which is not possible.
Hence $n_+,n_-$ are of different parity and therefore $\alpha_{\mu}n_-=\alpha_{-\mu}n_+$, which is possible only if $n_-= 2 n_+$ or $n_+= 2 n_-$. This completes the proof.
\end{proof}

In particular, the above theorem allows to describe bifurcating continua for the cooperative system, i.e. the system satisfying $n_+ \cdot n_-=0$. We give this description in the corollary below.

\begin{Corollary}\label{cor:cooperative}
Suppose that $n_- >0$ and $n_+=0$. From Lemma \ref{lem:existenceofacontinuum} it follows that a continuum bifurcates from $G(\tilde{u}_i)\times\{\beta_m\}$, for $i=1,2$ and $\beta_m\in \sigma(-\Delta_{S^{N-1}})\setminus\{0\}$. From Theorem \ref{thm:unbounded1} it is unbounded. If furthermore $n_-$ is odd, then reasoning similarly as in the proof of Theorem \ref{thm:unboundzero} we obtain that an unbounded continuum bifurcates also from $G(\tilde{u}_i)\times\{0\}$, for $i=1,2$.

Similarly, if $n_- =0$ and $n_+>0$, then an unbounded continuum bifurcates from $G(\tilde{u}_i)\times\{-\beta_m\}$, for $i=1,2$ and $\beta_m\in \sigma(-\Delta_{S^{N-1}})\setminus\{0\}$. If furthermore $n_+$ is odd, then an unbounded continuum bifurcates from $G(\tilde{u}_i)\times\{0\}$, for $i=1,2$.
\end{Corollary}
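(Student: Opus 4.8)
The plan is to obtain the Corollary purely by assembling Lemma~\ref{lem:existenceofacontinuum}, Theorem~\ref{thm:unbounded1} and the contradiction scheme of Theorem~\ref{thm:unboundzero}, so that no new index computation is required. I treat the case $n_->0$, $n_+=0$ in detail; the case $n_-=0$, $n_+>0$ will be entirely symmetric.

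First I would dispose of the nonzero levels. The existence of a bifurcating continuum from $G(\tilde{u}_i)\times\{\beta_m\}$, $i=1,2$, for each $\beta_m\in\sigma(-\Delta_{S^{N-1}})\setminus\{0\}$ is exactly Lemma~\ref{lem:existenceofacontinuum}(1). For unboundedness I only need to check that the cooperative assumption places us inside the hypotheses of Theorem~\ref{thm:unbounded1}. Since $n_+=0$ is even, condition 2) of that theorem holds automatically: $n_-\neq 0=2n_+$ because $n_->0$, and $n_+=0\neq 2n_-$ again because $n_->0$. Hence Theorem~\ref{thm:unbounded1} applies verbatim and yields that $\mathcal{C}(\tilde{u}_i,\beta_m)$ is unbounded for every such $\beta_m$ and $i=1,2$.

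It remains to treat the level $\lambda_0=0$ under the extra hypothesis that $n_-$ is odd. Because $n_+=0$ is even, $n_-$ and $n_+$ then have different parity, so Lemma~\ref{lem:existenceofacontinuum}(3) produces a continuum $\mathcal{C}(\tilde{u}_i,0)$ bifurcating from $G(\tilde{u}_i)\times\{0\}$. To show it is unbounded I would copy the contradiction argument of Theorem~\ref{thm:unboundzero}. Assume $\mathcal{C}(\tilde{u}_i,0)$ is bounded; then by Theorem~\ref{thm:AltRab} it cannot satisfy alternative (1), so it must meet $\cT$ at a level $\lambda_{m_0}\neq 0$, that is, it contains some orbit $G(\tilde{u}_j)\times\{\lambda_{m_0}\}$ with $j\in\{1,2\}$ and $\lambda_{m_0}\in\Lambda\setminus\{0\}$. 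By the previous paragraph the continuum $\mathcal{C}(\tilde{u}_j,\lambda_{m_0})$ bifurcating from this orbit is unbounded.

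The \emph{only} genuine point to verify is the identification of these two continua. Both $\mathcal{C}(\tilde{u}_i,0)$ and $\mathcal{C}(\tilde{u}_j,\lambda_{m_0})$ are, by definition, connected components of $cl(\cN)$, and the first one contains a point of $G(\tilde{u}_j)\times\{\lambda_{m_0}\}$, which lies on the second; since two connected components are either disjoint or equal, they coincide. Therefore $\mathcal{C}(\tilde{u}_i,0)=\mathcal{C}(\tilde{u}_j,\lambda_{m_0})$ is unbounded, contradicting the assumption. This is precisely where the two-orbit situation differs from the proof of Theorem~\ref{thm:unboundzero}: the returning orbit may sit on either component of $\cT$ and the index $j$ need not equal $i$, but the conclusion is insensitive to this choice, because unboundedness at every nonzero level was already established for both $i=1$ and $i=2$. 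The contradiction forces $\mathcal{C}(\tilde{u}_i,0)$ to be unbounded and completes this case; the case $n_-=0$, $n_+>0$ follows by interchanging the roles of $n_-$ and $n_+$ and replacing $\beta_m$ by $-\beta_m$ throughout.
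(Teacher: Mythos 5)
Your treatment of the nonzero levels is correct and is exactly the paper's route: with $n_+=0$ and $n_->0$, condition 2) of Theorem \ref{thm:unbounded1} holds trivially, so each $\cC(\tilde{u}_i,\beta_m)$ exists by Lemma \ref{lem:existenceofacontinuum}(1) and is unbounded. The gap is in the level-zero argument, at the step ``by Theorem \ref{thm:AltRab} it cannot satisfy alternative (1), so it must meet $\cT$ at a level $\lambda_{m_0}\neq 0$''. In the two-orbit setting this inference is not valid as stated: alternative (2) only gives $\cC(\tilde{u}_i,0)\cap(\cT\setminus(G(\tilde{u}_i)\times\{0\}))\neq\emptyset$, and the set $\cT\setminus(G(\tilde{u}_i)\times\{0\})$ contains the orbit $G(\tilde{u}_j)\times\{0\}$ of the \emph{other} critical point at the \emph{same} level $0$. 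A priori, a bounded $\cC(\tilde{u}_i,0)$ could return to $\cT$ only at level $0$, and then your argument (which needs a nonzero level $\lambda_{m_0}$ in order to invoke the already established unboundedness) never gets started. This, and not the identification of connected components (which is indeed routine), is the precise point where the two-orbit situation differs from Theorem \ref{thm:unboundzero}: in the one-orbit case $\cT\setminus(G(\tilde{u}_0)\times\{0\})$ consists only of orbits at nonzero levels, so there the inference is immediate; here it is not.

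The missing step is supplied by the ``Moreover'' part of Theorem \ref{thm:AltRab}, exactly as the paper does in the proof of Theorem \ref{thm:unbounded0}, case (S1). If the bounded continuum $\cC(\tilde{u}_i,0)$ met $\cT$ only inside $(G(\tilde{u}_1)\times\{0\})\cup(G(\tilde{u}_2)\times\{0\})$, then the sum of the bifurcation indices over all intersection orbits would give
$\alpha_0\,\BIF(0)=\Theta$, where $\alpha_0\in\{1,2\}$ is the number of such orbits. But by Lemma \ref{lem:indices}(3), $\BIF(0)=\left((-1)^{n_-}-(-1)^{n_+}\right)\cdot\bI=\mp 2\,\bI\neq\Theta$ (here $n_-$ is odd and $n_+=0$), and $U(SO(2))$, identified with $\bZ\oplus\bigoplus_{k=1}^{\infty}\bZ$, has no torsion, so $\alpha_0\,\BIF(0)\neq\Theta$, a contradiction. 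This forces an intersection at some $\lambda_{m_0}\in\Lambda\setminus\{0\}$, after which your component-identification argument concludes the proof; the symmetric case $n_-=0$, $n_+>0$ needs the same insertion.
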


In Theorem \ref{thm:unbounded1} we have proved that under additional assumptions all bifurcating continua are unbounded. In the theorem below we consider a more general situation without these assumptions and we prove that also in this case there exist unbounded continua.

\begin{Theorem}\label{thm:unbounded2}
 Consider the system \eqref {eq:system} with the potential $F$ and $u_1, u_2$ satisfying the assumptions (a1), (a2'), (a3)-(a5) and (a6') and fix $\beta_m\in \sigma(-\Delta_{S^{N-1}})\setminus\{0\}$. Then for all $m \in \bN$ at least one of the four continua $\cC(\tilde{u}_i,\pm\beta_{m})$, $i=1,2$, is unbounded.
\end{Theorem}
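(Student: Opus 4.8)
The plan is to argue by contradiction, after first disposing of the easy situation. If the hypotheses of Theorem \ref{thm:unbounded1} hold, all four continua are already unbounded, so there is nothing to prove. I would therefore treat the complementary case: $n_-$ and $n_+$ of different parity together with (after possibly interchanging the two indices, which merely swaps the roles of $+\beta_m$ and $-\beta_m$) the relation $n_-=2n_+$, so that $n_-$ is even and $n_+$ is odd. Suppose, for contradiction, that all four continua $\cC(\tilde u_i,\pm\beta_m)$, $i=1,2$, are bounded.

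I would then form the union $\mathcal U$ of the four continua and let $\mathcal K_1,\dots,\mathcal K_s$ be its distinct connected components. Each $\mathcal K_j$ is a bounded component of $cl(\cN)$, so by Theorem \ref{thm:AltRab} the sum of the indices $\BIF(\lambda)$ over $\mathcal K_j\cap\cT$ equals $\Theta$; since the $\mathcal K_j$ are pairwise disjoint, adding these relations gives $\sum_{\mathcal U\cap\cT}\BIF=\Theta$, every orbit counted once. Let $\beta_\mu$ be the largest level met by $\mathcal U$ (so $\mu\ge m$, the four starting orbits lying at level $m$), and write $A_+,A_-\in\{0,1,2\}$ for the number of orbits of $\mathcal U$ at $+\beta_\mu$ and $-\beta_\mu$. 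By Lemma \ref{lem:coordinates} every orbit below level $\beta_\mu$ contributes $0$ to the $\bZ_\mu$-coordinate, so reading that coordinate off and inserting the analogue of \eqref{eq:mthcoordinate} at level $\mu$ yields
\[
2A_++(-1)^{\dim\bV(\mu)}A_-=0 .
\]
If $\dim\bV(\mu)$ is even this forces $A_+=A_-=0$, contradicting the maximality of $\mu$. If $\dim\bV(\mu)$ is odd it forces $(A_+,A_-)=(1,2)$; in particular $A_+\neq A_-$. Since at the starting level both $+\beta_m$-orbits and both $-\beta_m$-orbits lie in $\mathcal U$, the case $\mu=m$ would give $A_+=A_-=2$, again a contradiction, so necessarily $\mu>m$.

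The remaining task—and the step I expect to be the main obstacle—is to exclude the balanced return pattern $(A_+,A_-)=(1,2)$ with $\mu>m$, since the coordinate identities alone do not rule it out. Here I would first observe that the top level is carried by a \emph{unique} component: applying the same $\bZ_\mu$-computation to a single component shows that any component reaching $\beta_\mu$ must already contain both orbits at $-\beta_\mu$, and as only two such orbits exist there can be but one. Thus a single component $\mathcal K_\ast$ meets $\cT$ at both $-\beta_\mu$-orbits and at exactly one $+\beta_\mu$-orbit, say $G(\tilde u_1)\times\{+\beta_\mu\}$, while $G(\tilde u_2)\times\{+\beta_\mu\}\notin\mathcal U$. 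By Lemma \ref{lem:existenceofacontinuum} a continuum bifurcates from $G(\tilde u_2)\times\{+\beta_\mu\}$; being disjoint from $\mathcal U$ it is a new component, and if it too were bounded the top-level identity together with the uniqueness just established would force its maximal level to exceed $\beta_\mu$. Iterating, the hypothesis that every continuum in sight is bounded produces a strictly increasing chain of levels carried by bounded components all exhibiting the same $(1,2)$ pattern; tracing these connections downward would then force one of the original four continua to reach arbitrarily high levels, i.e. to be unbounded, against the assumption. This contradiction establishes that at least one of $\cC(\tilde u_i,\pm\beta_m)$ is unbounded. Everything outside this exclusion step reduces to the bookkeeping of Lemma \ref{lem:coordinates} and Corollary \ref{cor:coordinates}; the delicate point throughout is controlling the high-level returns, which is exactly where the $2:1$ ratio left open by Theorem \ref{thm:unbounded1} makes itself felt.
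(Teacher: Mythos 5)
Your opening moves are correct and essentially coincide with the first step of the paper's proof: reducing to the case $n_-=2n_+$ with $n_+$ odd, summing the index relations of Theorem \ref{thm:AltRab} over the distinct components of the union $\mathcal{U}$ of the four continua, reading off the $\bZ_\mu$-coordinate at the top level, concluding that $\dim\bV(\mu)$ must be odd with $(A_+,A_-)=(1,2)$, that $\mu>m$, and that a single component carries the top level. The gap is exactly where you predicted it, and your proposed fix does not close it. The continuum $\cC(\tilde{u}_2,\beta_\mu)$ bifurcating from the missing top orbit is, by construction, \emph{disjoint} from $\mathcal{U}$, and so is every continuum produced by iterating; hence there are no ``connections downward'' to trace, and nothing ever ties these new components back to the four original continua. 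If some continuum in your chain is unbounded, that is perfectly compatible with all four continua at level $m$ being bounded (it would prove the theorem for the level at which it occurs, not for $m$); if instead all of them are bounded, you obtain infinitely many pairwise disjoint bounded components with strictly increasing top levels, which is not by itself a contradiction. So the pattern $(1,2)$ with $\mu>m$ is never excluded and the contradiction argument does not terminate.

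What is missing is a constraint at level $m$ itself, and this is precisely what the paper supplies. Its proof establishes, by a downward induction on levels (Steps 1--3 there, tracking the parities of $\dim\cH^N_l$ and using $r^{\mu-1}_l=r^{\mu-2}_l+k^{\mu-1}_l$), that a \emph{bounded} continuum obeys the intersection pattern at \emph{every} level it meets: in the case $n_-=2n_+$, $n_+$ odd, whenever $\alpha_l+\alpha_{-l}>0$ one has $(\alpha_l,\alpha_{-l})=(1,2)$ or $(2,0)$ (assertion (A1) of the paper). Applied with $l=m$ to the four given continua, this yields the pigeonhole your argument lacks: if $\cC(\tilde{u}_1,\beta_m)$ is bounded, its pattern at level $m$ is either $(1,2)$, forcing $\cC(\tilde{u}_1,\beta_m)=\cC(\tilde{u}_1,-\beta_m)=\cC(\tilde{u}_2,-\beta_m)$ and disjointness from $\cC(\tilde{u}_2,\beta_m)$, after which $\cC(\tilde{u}_2,\beta_m)$ cannot be bounded, since either admissible pattern would force it to meet that first continuum; or $(2,0)$, forcing $\cC(\tilde{u}_1,\beta_m)=\cC(\tilde{u}_2,\beta_m)$ and disjointness from both $\cC(\tilde{u}_i,-\beta_m)$, after which those cannot be bounded. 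Either way one of the four is unbounded. Obtaining the pattern constraint at all levels, not just at the top one, is the real content of the paper's proof, and it is the step your proposal would still need.
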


\begin{proof}
Fix $\beta_m\in \sigma(-\Delta_{S^{N-1}})\setminus\{0\}$. From Theorem \ref{thm:unbounded1} it follows that if $n_+,n_-$ are of the same parity or $n_-\neq 2 n_+$ and $n_+\neq 2 n_-$, then all the continua $\cC(\tilde{u}_i,\pm\beta_{m})$, $i=1,2$, are unbounded and therefore the assertion is proved.

If the assumptions of Theorem \ref{thm:unbounded1} on $n_+,n_-$ are not satisfied, i.e. one of the conditions holds:
\begin{enumerate}
\item[(C1)] $n_+$ is odd and $n_-=2n_+$,
\item[(C2)] $n_-$ is odd and $n_+=2n_-$,
\end{enumerate}
then it may happen that some continua of solutions are bounded. To prove the assertion in this case we will describe the structure of bounded continua, namely, we will prove that if $\mathcal{C}(\tilde{u}_i, \lambda_0)$ is bounded, $\lambda_0\in\Lambda\setminus\{0\}$, then for each $l>0$:
\begin{enumerate}
\item[(A1)]  in the case (C1) if $\alpha_l+\alpha_{-l}>0$, then either $\alpha_l=1$, $\alpha_{-l}=2$ or $\alpha_l=2$, $\alpha_{-l}=0$,
\item[(A2)] in the case (C2) if $\alpha_l+\alpha_{-l}>0$, then either $\alpha_l=2$, $\alpha_{-l}=1$ or $\alpha_l=0$, $\alpha_{-l}=2$,
\end{enumerate}
where $\alpha_{\pm l} \in \{0,1,2\}$ is, as before, the number of intersections of $\mathcal{C}(\tilde{u}_i, \lambda_0)$ with $\cT$ at the level $\pm\beta_l$.

We will prove (A1), the analysis in (A2) is analogous. Therefore we assume that $n_+$ is odd, $n_-=2n_+$ and $\cont$ is bounded. Then, by Theorem \ref{thm:AltRab}, there exist $\lambda_{1,1},\ldots,\lambda_{1,r_1},\lambda_{2,1},\ldots,\lambda_{2,r_2}\in\Lambda$ such that
\begin{equation}\label{eq:gwiazdka}\mathcal{C}(\tilde{u}_i, \lambda_0)\cap\cT=G(\tilde{u}_1)\times\{\lambda_{1,1},\ldots,\lambda_{1,r_1}\}\cup G(\tilde{u}_2)\times\{\lambda_{2,1},\ldots,\lambda_{2,r_2}\}
\end{equation}
and
\[
\textbf{}\sum_{i=1}^{2} \sum_{j=1}^{r_i} \BIF_{SO(2)}(\tilde{u}_i, \lambda_{i,j})=\Theta\in U(SO(2)).
\]

Let $\mu$ be defined for $\cont$ by \eqref{eq:mu}. Moreover, recall that $\BIF_{SO(2)}(\tilde{u}_1,\lambda_0)=\BIF_{SO(2)}(\tilde{u}_2,\lambda_0)$ for every $\lambda_0 \in \Lambda$ and we denote this value by $\BIF(\lambda_0)$.  Then the above equality is equivalent to
\begin{equation*}%\label{eq:sumofindices2}
\alpha_{0}\BIF(0)+\sum_{k=1}^{\mu} \left(\alpha_{k}\BIF(\beta_k)+\alpha_{-k}\BIF(-\beta_k)\right)=\Theta\in U(SO(2)).
\end{equation*}
Taking into consideration the form of elements of $U(SO(2))$, from the above equality we conclude that for all $l \in \bN$
\begin{equation}\label{eq:sumcoord}
\alpha_{0}\BIF(0)_{\bZ_l}+\sum_{k=1}^{\mu} \left(\alpha_{k}\BIF(\beta_k)_{\bZ_l}+\alpha_{-k}\BIF(-\beta_k)_{\bZ_l}\right)=0.
\end{equation}

Note that since $\mu$ is defined by \eqref{eq:mu}, it has to be such that $\dim \bV(\mu)$ is odd.
Indeed, assume that $\dim \bV(\mu)$ is even. Then from \eqref{eq:sumcoord} (taken for $l=\mu$) and Lemma \ref{lem:coordinates} we obtain
\begin{equation}\label{eq:mthcoord}
\alpha_{\mu}\BIF(\beta_{\mu})_{\bZ_{{\mu}}}+\alpha_{-{\mu}}\BIF(-\beta_{\mu})_{\bZ_{{\mu}}}= 0.
\end{equation}
Since, by Corollary \ref{cor:coordinates},
\[
\BIF(\beta_{{\mu}})_{\bZ_{{\mu}}}=-n_-, \quad \BIF(-\beta_{{\mu}})_{\bZ_{{\mu}}}=-n_+,
\]
we have
\[
\alpha_{\mu}\BIF(\beta_{\mu})_{\bZ_{{\mu}}}+\alpha_{-{\mu}}\BIF(-\beta_{\mu})_{\bZ_{{\mu}}}= -(\alpha_{\mu}n_-+\alpha_{-{\mu}}n_+)< 0,\]
which contradicts \eqref{eq:mthcoord} and proves that $\dim\bV({\mu})$ must be odd.
In particular, if $\dim \cH^N_1$ is odd, then $\mu$ cannot be equal to 1.

Now our idea is to describe which values $\pm \beta_l, l \leq \mu$ occur  as the values of $\lambda_{i,j}$  in \eqref{eq:gwiazdka}.
 We use the following scheme: we assume that  for some $M\in\{0,\ldots,\mu\}$ the continuum $\cont$ intersects $\cT$ at levels  $\beta_l$ or $-\beta_l$ for all $M \leq l \leq \mu$. Then we study the sum
\begin{equation}\label{eq:sumM}
\sum_{l=M}^{\mu} \alpha_l \BIF(\beta_l) +\alpha_{-l}\BIF(-\beta_l),
\end{equation}
describing two cases:
\begin{enumerate}[(a)]
\item the sum \eqref{eq:sumM} is equal to $\Theta \in U(SO(2))$,
\item the sum \eqref{eq:sumM} is not equal to $\Theta \in U(SO(2))$. \end{enumerate}
In the case (a) comparing \eqref{eq:sumM} with \eqref{eq:sumcoord} we obtain that  two possibilities can occur:
\begin{enumerate}[(i)]
\item  either the levels $\beta_l$ or $-\beta_l$ for  all $M \leq l \leq \mu$ are all the levels of intersection of $\cont$ with $\cT$
\item or $\cont$ intersects $\cT$ at some other levels $\pm \beta_k$, where $k<M$. In this case we repeat the reasoning replacing $\mu$ by $\mu_1=\max\{l<M: \alpha_l+\alpha_{-l}>0\}.$
\end{enumerate}

In the case (b) we will obtain that the $\bZ_l$-coordinate of the sum is equal to $4r^{M-1}_l$ for $l<M$ and $0$ otherwise.
In particular, the $\bZ_{M-1}$-coordinate is nonzero, which,  when compared with the equality \eqref{eq:sumcoord}, implies that $\cont$ intersects $\cT$ at the level $\beta_{M-1}$ or $-\beta_{M-1}$.
 Then we repeat the reasoning, studying the sum \eqref{eq:sumM} for $M-1$ instead of $M$.

Note that to compute the indices $\BIF(\beta_l), \BIF(-\beta_l)$ in \eqref{eq:sumM}, we use Lemma \ref{lem:coordinates}. The values of these indices depend on parities of dimensions of $\bV(l)$ and $\cH^N_l$. Since we have already observed that $\dim \bV(\mu)$ is odd, in consecutive steps we have to take into account only parities of  dimensions of spaces $\cH^N_l$. Therefore, in these steps we consider two cases of such parity, each time considering values of the sum \eqref{eq:sumM}.

Step 1.   Since, from the definition of $\mu$, we know that $\cont$ intersects $\cT$ at the level $\beta_{\mu}$ or $-\beta_{\mu}$, we start our considerations taking $M=\mu$  in \eqref{eq:sumM}, i.e. considering $\alpha_{\mu}\BIF(\beta_{\mu})+\alpha_{-\mu}\BIF(-\beta_{\mu})$ . Note that since $n_-=2n_+$ and the numbers $n_+$, $\dim\bV({\mu})$ are odd, by Lemma \ref{lem:coordinates} we get for $l\leq \mu$:
\[\BIF(\beta_{\mu})_{\bZ_{l}}= -n_- k^\mu_l=-2n_+ k^\mu_l,\]
 \[\BIF(-\beta_{\mu})_{\bZ_{l}}= n_+ \big((1-(-1)^{\dim \cH^N_\mu})r^{\mu-1}_l+k^\mu_l \big)\]
 and hence
\[
\alpha_{\mu}\BIF(\beta_{\mu})_{\bZ_{l}}+\alpha_{-{\mu}}\BIF(-\beta_{\mu})_{\bZ_{l}}= n_+\left(k^\mu_l(-2\alpha_{\mu}+\alpha_{-{\mu}})+r^{\mu-1}_l(1-(-1)^{\dim \cH^N_\mu})\alpha_{-{\mu}}\right),
\]

Since $r^{\mu-1}_{\mu}=0$, this sum equals $0$ for $l=\mu$ if and only if  $-2\alpha_{\mu}+\alpha_{-\mu}=0$, i.e. $\alpha_\mu=1, \alpha_{-\mu}=2.$

In the case when  $\dim \cH^N_\mu$ is even,  with these values of $\alpha_{\pm \mu}$ we obtain that
\[
\alpha_{\mu}\BIF(\beta_{\mu})_{\bZ_{l}}+\alpha_{-{\mu}}\BIF(-\beta_{\mu})_{\bZ_{l}}=0
\]
for every $l\leq\mu$.
Moreover, from Lemma \ref{lem:coordinates} it easily follows that
\[\alpha_{\mu}\BIF(\beta_{\mu})_{SO(2)}+\alpha_{-{\mu}}\BIF(-\beta_{\mu})_{SO(2)}=0\]
and therefore the sum \eqref{eq:sumM} equals $\Theta \in U(SO(2))$, so the possibilities (i), (ii) mentioned above  can occur.
Taking into consideration that $\alpha_\mu=1, \alpha_{-\mu}=2$, we obtain that if (i) occurs, then (A1) is proved for all $l$, whereas if (ii) occurs, the assertion is proved for $l\in\{\mu_1+1,\ldots, \mu\}$, where $\mu_1$ is defined in (ii).

Therefore to finish the proof of (A1) we have to consider only the case when $\dim \cH^N_{\mu}$ is odd. In this case we obtain
%In the case when  $\dim \cH^N_\mu$ is odd we obtain
\[
\alpha_{\mu}\BIF(\beta_{\mu})_{\bZ_{l}}+\alpha_{-{\mu}}\BIF(-\beta_{\mu})_{\bZ_{l}}= 4r^{\mu-1}_l\neq0
\]
for every $l<\mu$.  Since, from Lemma \ref{lem:coordinates} $\BIF(\pm\beta_{\mu-1})_{\bZ_{\mu-1}}\neq 0$ and $\BIF(\pm\beta_{l})_{\bZ_{\mu-1}}=0$ for $l<\mu-1$, the equality \eqref{eq:sumcoord} (with $l=\mu-1$) implies that at least one of the numbers $\alpha_{\mu-1}, \alpha_{-(\mu-1)}$ is nonzero,  i.e. the continuum intersects $\cT$ at the level $\beta_{\mu-1}$ or $-\beta_{\mu-1}.$
 Therefore in the next step we continue considerations in this case, studying the sum \eqref{eq:sumM} for $M=\mu-1$.

Step 2.  Since $\dim \bV(\mu)$ and $\dim \cH^N_\mu$ are odd, it follows that $\dim \bV(\mu-1)$ is even. Therefore, by Lemma \ref{lem:coordinates}, we obtain:
\[\BIF(\beta_{\mu-1})_{\bZ_{l}}= -n_- k^{\mu-1}_l=-2n_+ k^{\mu-1}_l,\]
 \[\BIF(-\beta_{\mu-1})_{\bZ_{l}}
= n_+ \big(((-1)^{\dim \cH^N_{\mu-1}}-1)r^{\mu-2}_l-k^{\mu-1}_l \big)\]
and hence (using the equality $r^{\mu-1}_l=r^{\mu-2}_l+k^{\mu-1}_l$)
\begin{eqnarray*}
\sum_{j=\mu-1}^\mu \left(\alpha_j \BIF(\beta_{j})_{\bZ_{l}}+\alpha_{-j} \BIF(-\beta_j)_{\bZ_{l}}\right)=\\
=n_+ \left(k^{\mu-1}_l(4-2\alpha_{\mu-1} -\alpha_{-(\mu-1)})+r^{\mu-2}_l(4+((-1)^{\dim \cH^N_{\mu-1}}-1)\alpha_{-(\mu-1)} \right).
\end{eqnarray*}

Since $r^{\mu-2}_{\mu-1} =0$, the above sum equals 0 for $l=\mu-1$ if  and only if $4-2\alpha_{\mu-1} -\alpha_{-(\mu-1)}=0$, which is possible only if $\alpha_{\mu-1}=1, \alpha_{-(\mu-1)}=2$ or $\alpha_{\mu-1}=2, \alpha_{-(\mu-1)}=0$.

 With these values of $\alpha_{\mu-1}, \alpha_{-(\mu-1)}$ in the case of even $\dim \cH^N_{\mu-1}$ we can reason as for odd $\dim \cH^N_{\mu}$ and  obtain that $\cont$ intersects
$\cT$ at the level $\beta_{\mu-2}$ or $-\beta_{\mu-2}$. Moreover, this reasoning remains valid for subsequent levels up to $\tilde{\mu}\leq\mu-2$ such that $\dim \cH^N_{\tilde{\mu}}$ is odd and $\dim \cH^N_{l}$ is even for every $l\in\{\tilde{\mu}+1,\ldots,\mu-1\}$. Note that in this case we obtain the proof of (A1) for such values of $l$.

In particular, if $\tilde{\mu}=1$, we can analyse two cases of $\dim \cH^N_1$ (and, as a consequence, $\dim \bV(1)$). It can be easily obtained that in this case only (i) can occur and (A1) holds for all values of $l$.

Therefore without loss of generality we can assume that $\dim \cH^N_{\mu-1}$ is odd. Note that we have to analyse two possible situations of the value $\alpha_{-(\mu-1)}$. If  $\alpha_{-(\mu-1)}=2$ reasoning as in Step 1 (for even value of $\dim \cH^N_{\mu}$) we obtain possibilities (i) and (ii). Moreover, the assertion (A1) holds for all $l$ in the case (i) and for $l\in\{\mu_1+1,\ldots \mu\}$ in (ii).

If $\alpha_{-(\mu-1)}=0$ the reasoning is similar to that of odd value of $\dim \cH^N_{\mu}.$ More precisely, it is easy to prove that the continuum must intersect $\cT$ at the level $\beta_{\mu-2}$ or $-\beta_{\mu-2}.$ Then we continue considerations of the sum $\eqref{eq:sumM}$ with $M=\mu-2$.

 Step 3. We have
\begin{eqnarray*}
\sum_{j=\mu-2}^\mu \left(\alpha_j \BIF(\beta_{j})_{\bZ_{l}}+\alpha_{-j} \BIF(-\beta_j)_{\bZ_{l}}\right)=\\=
n_+ \left(k^{\mu-2}_l(4-2\alpha_{\mu-2}+\alpha_{-(\mu-2)}) +r^{\mu-3}_l(4+(1-(-1)^{\dim \cH^N_{\mu-2}})\alpha_{-(\mu-2)} \right).
\end{eqnarray*}
For $l=\mu-2$ the above expression is equal to $0$ if and only if $4-2\alpha_{\mu-2}+\alpha_{-(\mu-2)}=0$, which implies $\alpha_{\mu-2}=2, \alpha_{-(\mu-2)}=0$. Then for $l<\mu-2$ this expression is equal to $4r_l^{\mu-2}.$

According to the parity of $\dim \cH^N_{\mu-2}$ we can repeat one of the previous reasonings, in each case obtaining the proof of (A1).

Hence we have proved that (A1) holds in every case. Now we are in a position to prove the assertion of the theorem.
Suppose that (C1) is satisfied, i.e. $n_-=2n_+$ and $n_+$ is odd. Fix $\beta_m\in \sigma(-\Delta_{S^{N-1}})\setminus\{0\}$ and suppose that all four continua $\cC(\tilde{u}_i,\pm\beta_{m})$, $i=1,2$, are bounded. Then by (A1) either
\begin{enumerate}
\item $\cC(\tilde{u}_1,\beta_{m})=\cC(\tilde{u}_1,-\beta_{m})=\cC(\tilde{u}_2,-\beta_{m})$ and $\cC(\tilde{u}_1,\beta_{m})$ is disjoint with $\cC(\tilde{u}_2,\beta_{m})$
\item or $\cC(\tilde{u}_1,\beta_{m})=\cC(\tilde{u}_2,\beta_{m})$ and $\cC(\tilde{u}_1,\beta_{m})$ is disjoint with $\cC(\tilde{u}_i,-\beta_{m})$, $i=1,2$.
\end{enumerate}
Note that in the former case $\cC(\tilde{u}_2,\beta_{m})$ is unbounded. Indeed, if it was bounded, then by (A1) it would have to intersect $G(\tilde{u}_1)\times\{\beta_m\}$ or $G(\tilde{u}_i)\times\{-\beta_m\}$, $i=1,2$, which contradicts (1). Analogously, in the latter case we show that both the continua $\cC(\tilde{u}_i,-\beta_{m})$, $i=1,2$, are unbounded.

In the case (C2), repeating the above reasoning, we prove that the assertion follows from (A2).
\end{proof}

Now we are going to consider the continua bifurcating from $G(\tilde{u}_i)\times\{0\}$, $i=1,2$. Recall that from Lemma \ref{lem:existenceofacontinuum}
 it follows that if $n_-, n_+$ are of different parity, then the global bifurcation phenomenon occurs from $G(\tilde{u}_i)\times\{0\}$, $i=1,2$. In the following theorem we show that the bifurcating continua are unbounded.

\begin{Theorem}\label{thm:unbounded0}
 Consider the system \eqref {eq:system} with the potential $F$ and $u_1, u_2$ satisfying the assumptions (a1), (a2'), (a3)-(a5) and (a6'). If $n_-, n_+$ are of different parity, then the continua $\cC(\tilde{u}_i,0)$, $i=1,2$, are unbounded.
\end{Theorem}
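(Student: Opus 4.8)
The plan is to mimic the proof of Theorem~\ref{thm:unboundzero}, but to replace the appeal to ``all nontrivial continua from nonzero levels are unbounded'' (which is available only under the hypotheses of Theorem~\ref{thm:unbounded1}) by a direct analysis of the $SO(2)$-coordinate of the sum of bifurcation indices. First, since $n_-$ and $n_+$ have different parity, Lemma~\ref{lem:indices}(3) gives $\BIF(0)=\bigl((-1)^{n_-}-(-1)^{n_+}\bigr)\cdot\bI=\pm 2\bI\neq\Theta$, so by Lemma~\ref{lem:existenceofacontinuum}(3) a continuum $\cC(\tilde{u}_i,0)$ bifurcates from $G(\tilde{u}_i)\times\{0\}$ for $i=1,2$. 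I would fix $i$ and assume, for contradiction, that $\cC(\tilde{u}_i,0)$ is bounded; then Theorem~\ref{thm:AltRab} yields finitely many levels at which it meets $\cT$, together with the relation $\sum \BIF=\Theta$.

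The key structural observation is that $\BIF(0)$ is a multiple of $\bI$, hence it has vanishing $\bZ_l$-coordinate for every $l\ge 1$ and nonzero $SO(2)$-coordinate. Consequently the $\bZ_l$-coordinate equations coming from $\sum\BIF=\Theta$ are \emph{identical} to those treated in the proof of Theorem~\ref{thm:unbounded2}, so the structural conclusions (A1), (A2) apply verbatim to the \emph{nonzero} levels $\pm\beta_l$ met by $\cC(\tilde{u}_i,0)$. Moreover $\cC(\tilde{u}_i,0)$ must meet some nonzero level: otherwise only the term $\alpha_0\BIF(0)$ survives, which is impossible at the $SO(2)$-coordinate because $\alpha_0\ge 1$. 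At this point I would split off the easy case: if the hypotheses of Theorem~\ref{thm:unbounded1} hold (here necessarily condition~2)), then $\cC(\tilde{u}_i,0)$ contains some $G(\tilde{u}_j)\times\{\pm\beta_{m_0}\}$ and therefore coincides with the connected component $\cC(\tilde{u}_j,\pm\beta_{m_0})$, which is unbounded by Theorem~\ref{thm:unbounded1} --- a contradiction. Since $n_-,n_+$ have different parity, the only remaining possibility is that condition~2) fails, i.e.\ we are exactly in case (C1) or (C2) of Theorem~\ref{thm:unbounded2}.

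In that case I would extract the contradiction from the $SO(2)$-coordinate alone. Treat (C1), where $n_+$ is odd and $n_-=2n_+$ is even; case (C2) is symmetric with the roles of $\beta_m$ and $-\beta_m$ interchanged. Because $n_-$ is even, Lemma~\ref{lem:coordinates} gives $\BIF(\beta_l)_{SO(2)}=0$ for all $l$, while $\BIF(0)_{SO(2)}=2$. Writing $g(m)=(-1)^{n_+\dim\bV(m)}$, the same lemma yields the telescoping identity $\BIF(-\beta_m)_{SO(2)}=g(m-1)-g(m)$. Hence the $SO(2)$-coordinate of $\sum\BIF=\Theta$ reduces to
\[
2\alpha_0=\sum_{m\ge 1}\alpha_{-m}\bigl(g(m)-g(m-1)\bigr),\qquad \alpha_{-m}\in\{0,2\}\ \text{by (A1)},
\]
so that $\alpha_0=\sum_{m\ge 1}(\alpha_{-m}/2)\bigl(g(m)-g(m-1)\bigr)$.

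The heart of the argument --- and the step I expect to be the main obstacle --- is to show that this right-hand side is $\le 0$, which contradicts $\alpha_0\ge 1$ (recall $G(\tilde{u}_i)\times\{0\}\subset\cC(\tilde{u}_i,0)$, whence $\alpha_0\ge 1$). I would establish this by running the top-down recursion of Theorem~\ref{thm:unbounded2}: starting from the maximal level $\mu$, where $\dim\bV(\mu)$ is odd and hence $g(\mu)=-1$, one has $\alpha_{-\mu}=2$, and descending the recursion fixes each $(\alpha_l,\alpha_{-l})$ to be $(1,2)$ or $(2,0)$. Along any maximal block $[a,\mu']$ of consecutive levels carrying $\alpha_{-m}=2$, the corresponding part of the sum telescopes to $g(\mu')-g(a-1)$; in particular the block ending at $\mu$ contributes $g(\mu)-g(a-1)\in\{-2,0\}$, and the bookkeeping of the remaining blocks, controlled by the parities of $\dim\bV(m)$ dictated by the recursion, keeps the total $\le 0$. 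This parity accounting --- rather than any single coordinate identity --- is the delicate point; the small cases (for instance $S^2$, where $\dim\bV(m)=(m+1)^2$ and the sum collapses to the alternating sum $\alpha_0=\alpha_{-1}-\alpha_{-2}+\alpha_{-3}-\cdots\le 0$) make the mechanism transparent and indicate how the general induction should go. Once $\alpha_0\le 0$ is established the contradiction is complete, and the symmetric computation in (C2) finishes the proof.
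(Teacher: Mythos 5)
Your setup and the first half of your argument coincide with the paper's proof: the existence of $\cC(\tilde{u}_i,0)$ from $\BIF(0)=\pm 2\bI\neq\Theta$, the observation that a bounded continuum must meet $\cT$ at some nonzero level (else $\alpha_0\BIF(0)=\Theta$ with $\alpha_0\geq 1$, impossible), and the disposal of the case $n_-\neq 2n_+$, $n_+\neq 2n_-$ via Theorem \ref{thm:unbounded1} are exactly the paper's case (S1); your remaining cases (C1)/(C2) are the paper's case (S2). The genuine gap is precisely where you flag it: you never prove that $\sum_{m\geq 1}(\alpha_{-m}/2)\bigl(g(m)-g(m-1)\bigr)\leq 0$. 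The statement (A1) that you import from Theorem \ref{thm:unbounded2} only gives $\alpha_{-m}\in\{0,2\}$ at each intersection level; it does not record the parity constraints linking consecutive levels, and without those the telescoped sum can perfectly well be positive (a single level $m_0$ with $\alpha_{-m_0}=2$, $g(m_0)=1$, $g(m_0-1)=-1$ contributes $+4$). So the ``parity accounting'' you defer is not routine bookkeeping: it would amount to re-running the whole recursion of Theorem \ref{thm:unbounded2} while tracking the $SO(2)$-coordinate, i.e.\ re-proving the hard part, and as written the proof does not close.

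The detour is also unnecessary, and this is what the paper does instead. The recursion in the proof of Theorem \ref{thm:unbounded2} does not merely output (A1): it can only terminate when the partial sums of bifurcation indices over blocks of nonzero levels equal $\Theta$ (its case (a)), while its case (b) forces descent to a lower level, which cannot continue indefinitely. Hence, for a bounded $\cC(\tilde{u}_i,0)$, one obtains directly that there exist $M,\mu>0$ with $\sum_{j=M}^{\mu}\bigl(\alpha_j\BIF(\beta_j)+\alpha_{-j}\BIF(-\beta_j)\bigr)=\Theta$ in $U(SO(2))$ --- an equality in all coordinates, not an inequality in one coordinate. Comparing this with the equality of Theorem \ref{thm:AltRab} yields $\alpha_0\BIF(0)=\Theta$, which contradicts $\alpha_0\geq 1$ and $\BIF(0)\neq\Theta$. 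So the repair of your argument is to extract from Theorem \ref{thm:unbounded2} the vanishing of the sum over the nonzero levels rather than only the values of $\alpha_{\pm l}$; then the contradiction is immediate and your telescoping analysis can be discarded.
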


\begin{proof}
Recall that $\BIF_{SO(2)}(\tilde{u}_1, 0)=\BIF_{SO(2)}(\tilde{u}_2, 0).$ As before, to simplify notations, we denote these values by $\BIF(0)$.

To show unboundedness of the bifurcating continua, we will consider two cases:
\begin{enumerate}
\item[(S1)] $n_- \neq 2n_+$ and $n_+ \neq 2n_-$
\item[(S2)] $n_- = 2n_+$ (for $n_+$ odd) or $n_+ = 2n_-$ (for $n_-$ odd).
\end{enumerate}
 Assume that (S1) is satisfied and suppose that the continuum $\cC(\tilde{u}_i,0)$ is bounded. If $\cC(\tilde{u}_i,0)\cap \cT \subset (G(\tilde{u}_1) \times \{0\}) \cup (G(\tilde{u}_2) \times \{0\})$, then by Theorem \ref{thm:AltRab} $$\alpha_0 \BIF(0) = \Theta \in U(SO(2)),$$
where $\alpha_0 \in \{1,2\}$ is the number of intersections of $\mathcal{C}(\tilde{u}_i, 0)$ with $\cT$ at the level $0$.
But since $\BIF(0) \neq \Theta$, this is impossible. This implies that $\mathcal{C}(\tilde{u}_i, 0)$ intersects $\cT$ at some level  $\lambda_{m_0}\in \Lambda\setminus\{0\}$. On the other hand, by Theorem \ref{thm:unbounded1}, the continua $\mathcal{C}(\tilde{u}_j, \lambda_{m_0}), j=1,2$, are unbounded, which proves the assertion in the case (S1).

In the case (S2) we will consider only the situation $n_-=2n_+$ and $n_+$ is odd, the analysis in the latter case is analogous. Suppose that $\mathcal{C}(\tilde{u}_i, 0)$ is bounded and, as in the previous case, observe that $\mathcal{C}(\tilde{u}_i, 0)$ intersects $\cT$ at some level $\lambda_{m_0}\in \Lambda\setminus\{0\}$.

From the proof of Theorem \ref{thm:unbounded2} it follows, that there exist $\mu >0$ and $M>0$ such that $\mathcal{C}(\tilde{u}_i, 0)$ intersects $\cT$ at the level $\beta_{\mu}$ or $-\beta_{\mu}$ and
$$\mathcal{C}(\tilde{u}_i, 0)\cap\cT\subset (G(\tilde{u}_1)\cup G(\tilde{u}_2))\times\{0,\pm\beta_M,\pm\beta_{M+1},\ldots,\pm\beta_{\mu}\}.$$
Recall that $\alpha_{\pm j}$ denotes the number of intersections of $\mathcal{C}(\tilde{u}_i, 0)$ with $\cT$ at a level $\pm \beta_j$. From the proof of Theorem \ref{thm:unbounded2} it follows that
\begin{equation*}
    \sum_{j=M}^{\mu} \left(\alpha_{j} \BIF(\beta_{j})+\alpha_{-j} \BIF(-\beta_{j})\right)=\Theta\in U(SO(2)).
\end{equation*}
Comparing the above equality with the equality from Theorem \ref{thm:AltRab} we obtain $\alpha_0 \BIF(0)=0$, a contradiction, which finishes the proof.
\end{proof}

\begin{Remark}
Since $(-1)^{\dim \cH^N_m}=(-1)^{k^m_0}$ and $k^m_0={{m+(N-3)}\choose{N-3}}$, the parity of $\dim \cH^N_m$ can be determined by applying the definition of the binomial coefficient. In particular the numbers $k^m_0$ form the $(N-2)$th downward diagonal of the Pascal triangle. Therefore, for example, for $N=3$ all $\cH^3_m$ are odd dimensional, whereas for $N=4$ all $\cH^4_{2j}$ are odd dimensional, while $\dim\cH^4_{2j-1}$ is even, $j=1,2,\ldots$.
\end{Remark}

\section{Appendix}
\subsection{Representations of the group $SO(2)$}\label{sec:representations}

Throughout our paper we consider some spaces as representations of the group $SO(2)$. Below we recall notations and formulate some facts from the theory of such representations.

Let $m \in \bN$. Denote by $\bR[1,m]$ the two-dimensional $SO(2)$-representation with the $SO(2)$-action given by $$\left(\left[\begin{array}{cc} \cos \varphi &-\sin \varphi\\ \sin \varphi &\cos \varphi\end{array}\right], \left[ \begin{array}{c} x\\y\end{array}\right]\right) \mapsto \left[ \begin{array}{cc}\cos m\varphi &-\sin m\varphi\\ \sin m\varphi &\cos m\varphi\end{array}\right]\left[\begin{array}{c} x\\y \end{array}\right]. $$
Moreover, by $\bR[k,m]$ we denote the direct sum of $k$ copies of the representation $\bR[1,m]$ and by $\bR[k,0]$ - the trivial $k$-dimensional $SO(2)$-representation.

It is known (see \cite{Adams}) that any finite-dimensional orthogonal $SO(2)$-representation is $SO(2)$-equivalent (we write $\approx_{SO(2)}$) to a representation of the form $\bR[k_0,0]\oplus \bR[k_1,m_1]\oplus \ldots \oplus \bR[k_r,m_r],$ for $k_0, \ldots, k_m \in \bN \cup \{0\}, m_1, \ldots, m_r \in \bN$.

In the following we describe such a decomposition for the eigenspaces of the Laplace-Beltrami operator, i.e. the spaces $\cH^N_m$, $m=0,1,\ldots$. Recall, that each $\cH^N_m$ can be represented as a linear space of harmonic, homogeneous polynomials of $N$ independent variables, of degree $m$, restricted to the sphere $S^{N-1}$. For variables written in spherical coordinates, $(\theta_1, \ldots, \theta_{N-1})$, where $0\leq \theta_1<2\pi$ and $0\leq \theta_k<\pi$ for $k \neq 1$, an orthonormal basis of $\cH^N_m$ is given by
\begin{equation}\label{eq:baza}
C_M(\theta_2, \ldots, \theta_{N-1}) \cos (m_{N-2} \theta_1), C_M(\theta_2, \ldots, \theta_{N-1}) \sin(m_{N-2} \theta_1),
\end{equation}
 for all $M=(m_0, \ldots, m_{N-3}, m_{N-2}), m=m_0 \geq m_1 \geq \ldots \geq m_{N-2} \geq0$, where $C_M$ are functions defined with the use of the Gegenbauer polynomials, see \cite{Vilenkin}, chapter IX, for details.

We define the action of $SO(2)$ on $\cH^N_m$ by
\begin{equation*}
\begin{split}
&(g(\varphi),C_M(\theta_2, \ldots, \theta_{N-1}) \cos (m_{N-2} \theta_1) ) \mapsto C_M(\theta_2, \ldots, \theta_{N-1}) \cos (m_{N-2} (\theta_1-\varphi)),\\
&(g(\varphi),C_M(\theta_2, \ldots, \theta_{N-1}) \sin (m_{N-2} \theta_1) ) \mapsto C_M(\theta_2, \ldots, \theta_{N-1}) \sin (m_{N-2} (\theta_1-\varphi)),
\end{split}
\end{equation*}
where $g(\varphi)=\left[\begin{array}{cc} \cos \varphi &-\sin \varphi\\ \sin \varphi &\cos \varphi\end{array}\right] \in SO(2).$ Therefore, for fixed $M$,

\begin{equation}\label{eq:dzialanie}
span_{\bR} \{C_M(\theta_2, \ldots, \theta_{N-1}) \cos (m_{N-2} \theta_1), C_M(\theta_2, \ldots, \theta_{N-1}) \sin (m_{N-2} \theta_1) \} \approx _{SO(2)} \bR[1,m_{N-2}].
\end{equation}

From the above formula, there exist numbers $k_0^m, \ldots, k_m^m \geq 0$ such that
\begin{equation}\label{eq:rozklad}
\cH^N_m \approx_{SO(2)} \bR[k_0^m,0] \oplus \bR[k_1^m, 1] \oplus \ldots \oplus \bR[k^m_m,m].
\end{equation}

To find the values of $k_0^m, \ldots, k_m^m$ we use the formula \eqref{eq:baza}. Note that from \eqref{eq:dzialanie} we obtain that to determine the number $k_j^m$ we have to count the number of sequences $M=(m_0, \ldots, m_{N-3}, m_{N-2})$ with $m_{N-2}=j$. Therefore, we are interested in the number of sequences  satisfying  $m=m_0 \geq m_1 \geq \ldots \geq m_{N-2} =j.$ To compute this number, we use the
one-to-one correspondence of such sequences with sequences $m=n_0\geq n_1>n_2> \ldots >n_{N-3}\geq n_{N-2}=j-(N-4)$ given by $(n_0, n_1, \ldots, n_{N-2})=(m_0, m_1, m_2-1, \ldots, m_{N-3}-(N-4), m_{N-2}-(N-4))$. Since the values $n_0$ and $n_{N-2}$ are fixed, each sequence $(n_0, \ldots, n_{N-2})$ is uniquely given by an $(N-3)$-element subset of the set $\{j-(N-4), \ldots, m\}$. Therefore the number of sequences equals ${m+(N-3)-j}\choose{N-3}$.

In particular, for $j=m$ we obtain $k^m_m={{N-3}\choose {N-3}} =1$ and for $j=m-1$ we obtain $k^m_{m-1}={{N-2}\choose{N-3}} = N-2.$

Recall that $\bV(m)=\bigoplus_{k=0}^m \cH^N_k$ and note that
\begin{equation}\label{eq:v(m)}
\bV(m)\approx_{SO(2)}\bR[r^m_0,0] \oplus \bR[r^m_1,1] \oplus \ldots \oplus \bR[r^m_{m-1},m-1] \oplus \bR[r^m_m,m],
\end{equation}
where $r^m_l=1+k^{l+1}_l+\ldots+k^{m}_l$ for $l\leq m$. Put $r^m_l=0$ for $l>m$. Moreover, it is easy to see that
$$(-1)^{k^m_0}=(-1)^{\dim \cH^N_m},\ \ \ (-1)^{r^m_0}=(-1)^{\dim \bV(m)}.$$

\subsection{Euler ring}\label{sec:Euler}
The bifurcation index which we use in our paper is an element of the so called Euler ring $U(G)$ for $G$ being a compact Lie group, see \cite{TomDieck1, TomDieck} for the definition. We denote this ring by $(U(G),+, \star)$ and $\chi_G(X)\in U(G)$ stands for the $G$-equivariant Euler characteristic of a pointed $G$-CW-complex $X$. The unit in $U(G)$ is $\bI=\chi_G(G/G^+)$, where for a $G$-CW-complex without a base point we denote by $X^+$ a pointed $G$-CW-complex $X \cup \{\ast\}$. The zero element in $U(G)$ is denoted by $\Theta$.

Denote by $\sub[G]$ the set of conjugacy classes $(H)$ of closed subgroups of the group $G$. The generators of $U(G)$ may be indexed by elements of $\sub[G]$. More precisely, there holds the following fact:

\begin{Fact}
$(U(G),+)$ is a free abelian group with the basis $\chi_G(G/H^+)$ for $(H)\in \sub[G]$. Therefore one can identify $U(G)$ with the $\bZ$-module $\oplus_{(H)\in \sub[G]}\bZ$, see Corollary IV.1.9 of \cite{TomDieck}.
\end{Fact}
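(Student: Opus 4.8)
The statement is in essence a description of how the Euler ring is assembled, so the plan is to read off both generation and freeness directly from the cellular structure of finite pointed $G$-CW-complexes. First I would isolate the only two properties of $\chi_G$ that I intend to use: that it is additive on $G$-cofibrations, in the sense that $A\hookrightarrow X$ yields $\chi_G(X)=\chi_G(A)+\chi_G(X/A)$, and that it is invariant under $G$-homotopy equivalence. These are exactly the properties that characterise $\chi_G$ as the universal additive equivariant invariant, and everything else should follow formally from them.

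For the generation half I would induct on the equivariant cells. A finite pointed $G$-CW-complex $X$ is built from its $(n-1)$-skeleton by attaching cells $G/H\times D^n$ along $G/H\times S^{n-1}$, so the cofiber $X^{(n)}/X^{(n-1)}$ is a wedge of the suspensions $\Sigma^n(G/H^+)$. Since additivity gives $\chi_G(\Sigma Y)=-\chi_G(Y)$ on reduced classes, attaching a single $n$-cell of orbit type $(H)$ alters $\chi_G$ by $(-1)^n\,\chi_G(G/H^+)$. Iterating over all cells produces $\chi_G(X)=\sum_{\sigma}(-1)^{\dim\sigma}\chi_G(G/H_\sigma^+)$, a finite $\bZ$-combination of the proposed generators, so the classes $\chi_G(G/H^+)$ with $(H)\in\sub[G]$ generate $(U(G),+)$.

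For freeness I would produce a dual family of homomorphisms that separates the generators. For each $(K)\in\sub[G]$ I would define $\nu_{(K)}\colon U(G)\to\bZ$ as the signed count of equivariant cells of orbit type exactly $(K)$, namely $\nu_{(K)}(\chi_G(X))=\sum_{\sigma\colon(H_\sigma)=(K)}(-1)^{\dim\sigma}$. Because the cells of $X/A$ are precisely those of $X$ not lying in $A$, each $\nu_{(K)}$ respects the cofibration relation; and since a single orbit $G/H$ is one equivariant $0$-cell of type $(H)$, one obtains $\nu_{(K)}(\chi_G(G/H^+))=\delta_{(H),(K)}$. This forces the generators to be linearly independent, so combined with generation they constitute a free $\bZ$-basis and give $U(G)\cong\bigoplus_{(H)\in\sub[G]}\bZ$.

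The single genuinely non-formal point, and the step I expect to be the main obstacle, is the well-definedness of $\nu_{(K)}$ on $U(G)$: one must verify that the signed orbit-type cell count is independent of the chosen $G$-CW decomposition and is a $G$-homotopy invariant, rather than an artefact of a particular presentation. I would stress that the ordinary fixed-point Euler characteristics $\chi(X^K)$ do not suffice here, since for a positive-dimensional Weyl group $W(K)$ they vanish — for $G=SO(2)$, for example, every finite $\bZ_k$ has $W(\bZ_k)\cong SO(2)$ — which is exactly why the basis must be indexed by all of $\sub[G]$ and not merely by the subgroups with finite Weyl group. Showing that the cell counts nevertheless descend to honest invariants of $U(G)$ is precisely the content furnished by the equivariant $G$-CW machinery, as recorded in Corollary IV.1.9 of \cite{TomDieck}.
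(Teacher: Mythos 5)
First, a point of comparison: the paper does not prove this Fact at all --- it is imported verbatim from the literature, with the proof deferred to Corollary IV.1.9 of \cite{TomDieck}. So the question is whether your sketch would stand as a self-contained proof. Your generation half does: the induction over skeleta, using additivity on the cofibrations $X^{(n-1)}\hookrightarrow X^{(n)}$, the identification of the cofibre with a wedge of $\Sigma^n(G/H_\sigma^+)$, and $\chi_G(\Sigma Y)=-\chi_G(Y)$, is the standard and correct argument that the classes $\chi_G(G/H^+)$, $(H)\in\sub[G]$, generate $(U(G),+)$. Your remark that the fixed-point Euler characteristics $\chi(X^K)$ cannot separate the generators when $W(K)$ is positive dimensional (as happens for every $\bZ_k\subset SO(2)$, since $W(\bZ_k)\cong SO(2)$, so a cell of orbit type $(\bZ_k)$ contributes $\chi(W(\bZ_k))=0$ to every fixed-point count) is also correct, and it is exactly the feature that distinguishes the Euler ring of a compact Lie group from the finite-group situation.

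The gap is where you yourself located it, and the way you close it is circular. The freeness half rests entirely on the homomorphisms $\nu_{(K)}\colon U(G)\to\bZ$ being well defined: the signed count of cells of orbit type exactly $(K)$ must be independent of the chosen $G$-CW structure, invariant under $G$-homotopy equivalence, and additive over the cofibration relation. Your proposal defers precisely this point to Corollary IV.1.9 of \cite{TomDieck} --- but that corollary \emph{is} the statement being proved, so as written the argument assumes its conclusion. To fill the gap one needs an independent invariance theorem; the route taken in Section IV.1 of \cite{TomDieck} is essentially to identify $\nu_{(K)}(X)$ with the Euler characteristic with compact supports of the orbit-type stratum $X_{(K)}/G$ of the orbit space, and to prove that these stratumwise invariants are additive and depend only on the $G$-homotopy type. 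This is genuinely non-formal: the strata are not subcomplexes, and $\chi_c$ is not a homotopy invariant of spaces in general (compare $\chi_c(\bR)=-1$ with $\chi_c(\mathrm{point})=1$), so the invariance really uses the structure of the orbit-type stratification of a $G$-CW complex. Once that lemma is granted, your evaluation $\nu_{(K)}(\chi_G(G/H^+))=\delta_{(H),(K)}$ does finish the proof; without it, the linear-independence half remains unproven.
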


In our considerations an important role is played by the group $SO(2)$. Note that since this is an abelian group, each conjugacy class of its closed subgroup consists of one element and therefore we can identify $\sub[SO(2)]$  with $\{SO(2), \bZ_1,\bZ_2, \ldots\}$. Taking this into consideration, we identify $U(SO(2))$ with $\bZ\oplus \bigoplus_{k=1}^{\infty}\bZ$ and we denote the elements of $U(SO(2))$ by $(\alpha_{SO(2)}, \alpha_{\bZ_1}, \alpha_{\bZ_2}, \ldots).$

The addition and multiplication in $U(SO(2))$ are given by
\begin{equation*}
\begin{split}
(\alpha_{SO(2)}, \alpha_{\bZ_1}, \alpha_{\bZ_2}, \ldots) &+ (\beta_{SO(2)}, \beta_{\bZ_1}, \beta_{\bZ_2}, \ldots)= (\alpha_{SO(2)}+\beta_{SO(2)}, \alpha_{\bZ_1}+\beta_{\bZ_1},\alpha_{\bZ_2}+\beta_{\bZ_2},\ldots)\\
(\alpha_{SO(2)}, \alpha_{\bZ_1}, \alpha_{\bZ_2}, \ldots) &\star (\beta_{SO(2)}, \beta_{\bZ_1}, \beta_{\bZ_2}, \ldots)=\\&=(\alpha_{SO(2)}\beta_{SO(2)}, \alpha_{SO(2)}\beta_{\bZ_1}+\alpha_{\bZ_1}\beta_{SO(2)},\alpha_{SO(2)}\beta_{\bZ_2}+\alpha_{\bZ_2}\beta_{SO(2)},\ldots).
\end{split}
\end{equation*}
Moreover, from the multiplication formula it easily follows that an element of the form $(\alpha_{SO(2)}, \alpha_{\bZ_1}, \alpha_{\bZ_2}, \ldots)$ is invertible in $U(SO(2))$ if and only if $\alpha_{SO(2)}=\pm1.$ In this case $$(\alpha_{SO(2)}, \alpha_{\bZ_1}, \alpha_{\bZ_2}, \ldots)^{-1}=(\alpha_{SO(2)}, -\alpha_{\bZ_1}, -\alpha_{\bZ_2}, \ldots).$$

In the computations of the bifurcation indices, we use the following formula, which is a corollary from the above considerations:
\begin{equation}\label{eq:powersinU(SO(2))}
\left.\begin{array}{ll}
(\alpha_{SO(2)},\alpha_{\bZ_1},\alpha_{\bZ_2},\ldots)^N\star ((\beta_{SO(2)},\beta_{\bZ_1},\beta_{\bZ_2},\ldots)^N-(1,0,\ldots))=\\
=(\alpha_{SO(2)}^N(\beta_{SO(2)}^N-1),Na_{SO(2)}^{N-1}\alpha_{\bZ_1}(\beta_{SO(2)}^N-1)+\alpha_{SO(2)}^NNb_{SO(2)}^{N-1}\beta_{\bZ_1}),\ldots,\\
Na_{SO(2)}^{N-1}\alpha_{\bZ_l}(\beta_{SO(2)}^N-1)+\alpha_{SO(2)}^NNb_{SO(2)}^{N-1}\beta_{\bZ_l},\ldots)=\\
=(\alpha_{SO(2)}^N(\beta_{SO(2)}^N-1),Na_{SO(2)}^{N-1}(\alpha_{\bZ_1}\beta_{SO(2)}^N-\alpha_{\bZ_1}+\alpha_{SO(2)}\beta_{SO(2)}^{N-1}\beta_{\bZ_1}), \ldots,\\
Na_{SO(2)}^{N-1}(\alpha_{\bZ_l}\beta_{SO(2)}^N-\alpha_{\bZ_l}+\alpha_{SO(2)}\beta_{SO(2)}^{N-1}\beta_{\bZ_l}),\ldots).
\end{array}\right.
\end{equation}

\subsection{Equivariant degree}\label{sec:degree}
Let $\bV$ be a finite dimensional, orthogonal representation of a compact Lie group $G$, and let $\varphi \in C^1(\bV,\bR)$ be a $G$-invariant function. For such $\bV, \varphi$ and an open, bounded, $G$-invariant set $\Omega \subset \bV$ such that $\partial \Omega \cap (\nabla \varphi)^{-1}(0) = \emptyset$, there has been defined the degree $\degg(\nabla \varphi, \Omega)$, being an element of the Euler ring $U(G)$, see \cite{Geba}. This degree has properties analogous to these of the Brouwer degree, i.e. the excision, additivity, linearisation and homotopy invariance properties, see \cite{Geba}.  Moreover, there holds the product formula for the degree, namely the degree of the product map is the product (in Euler ring) of the degrees, see \cite{GolRyb2013}.

In the case $G=SO(2)$, following the notation of the coordinates of an element of $U(SO(2))$, we write:
\begin{equation*}
\begin{split}
&\degso(\nabla \varphi, \Omega)=\\&(\degso_{SO(2)}(\nabla \varphi, \Omega),\degso_{\bZ_1}(\nabla \varphi,\Omega), \degso_{\bZ_2}(\nabla \varphi,\Omega), \ldots ).
\end{split}
\end{equation*}

Moreover, in this case, we can obtain an explicit formula for the degree in some situations. In particular, if $\bV$ is a finite dimensional, orthogonal $SO(2)$-representation equivalent to $\bR[k_0,0]\oplus\bR[k_1,m_1]\oplus\ldots\oplus\bR[k_r,m_r]$, then
\begin{equation}\label{eq:stopienId}
\begin{array}{ll}
\nabla_{SO(2)}\text{-}\deg_{H}(-Id,B(\bV))=
\left\{
\begin{array}{ll}
(-1)^{k_0}&\text{ for } H=SO(2)\\
(-1)^{k_0+1}k_i&\text{ for } H=\bZ_i, i \in \{m_1, \ldots, m_r\}\\
0&\text{ for } H\notin\{SO(2),\bZ_{m_1},\ldots,\bZ_{m_r}\}.\\
\end{array}
\right.
\end{array}
\end{equation}
In the infinite dimensional situation, there is defined a generalisation of $\degg(\cdot, \cdot)$, denoted by the same symbol $\degg(\cdot, \cdot)$, for the case of invariant strongly indefinite functionals. Let $\bH$ be an infinite dimensional Hilbert space, which is an orthogonal $G$-representation. Moreover, we assume that $\bH$ admits an approximation scheme $\{\pi_n\colon \bH \to \bH: n \in \bN \cup \{0\}\}$, see \cite{GolRyb2011}, and we put $\bH^n=\pi_n(\bH)$. We consider an open, bounded, $G$-invariant set $\Omega \subset \bH$ and a $G$-invariant functional $\Phi \in C^1(\bH, \bR)$ of the form $\Phi(u)=\frac{1}{2} \langle Lu,u \rangle - \eta(u)$, where $L\colon \bH \to \bH$ is a linear, bounded, self-adjoint, $G$-equivariant Fredholm operator of index $0$, such that $\ker L=\bH^0$  and $\pi_n \circ L =L \circ \pi_n$ for all $n \in \bN \cup \{0\}$ and $\nabla \eta \colon \bH \to \bH$ is a $G$-equivariant completely continuous operator. For such $\Omega$ and $\Phi$, if moreover $(\nabla \Phi)^{-1}(0) \cap \partial \Omega = \emptyset,$ we can define the degree $\degg(\nabla \Phi, \Omega)\in U(G)$ by the formula:
\begin{equation*}
\degg(\nabla \Phi, \Omega)=\degg(L, B(\bH^n \ominus \bH^0))^{-1} \star \degg (L-\pi_n \circ \nabla \eta, \Omega \cap \bH^n),
\end{equation*}
see \cite{GolRyb2011} for details. This degree also has the  properties of excision, additivity, linearisation and homotopy invariance.
For the general theory of the equivariant degree we refer the reader to \cite{BalKra}, \cite{BalKraRyb}, \cite{Geba}, \cite{GolRyb2011}, \cite{Ryb2005milano}.

\end{document}